\documentclass[11pt]{amsart} 
\usepackage[lmargin=1in,rmargin=1in,tmargin=1in,bmargin=1in]{geometry}
\usepackage[ps,all,arc,rotate]{xy}
\usepackage{graphicx, float, epstopdf}
\usepackage{bbm}
\usepackage{color}
\usepackage[unicode,bookmarks=false]{hyperref}
\hypersetup{hidelinks}
\usepackage{centernot}
\usepackage{fancyhdr}
\usepackage{multirow}
\usepackage[utf8]{inputenc}
\usepackage{amsfonts,amssymb,amsmath,amsthm,mathrsfs}
\usepackage{graphics, setspace}
\usepackage{braket}
\usepackage{mathtools}
\usepackage{tikz}  
\usepackage{upgreek}
\usepackage{xcolor}
\usepackage{array,esint}

\numberwithin{equation}{section}
\numberwithin{figure}{section}
\allowdisplaybreaks[4]         
 
\newtheorem{lemma}{Lemma}[section]
\newtheorem{theorem}{Theorem}[section]
\newtheorem{proposition}{Proposition}[section]

\newtheorem{corollary}[lemma]{Corollary}
\theoremstyle{definition}

\newtheorem{remark}{Remark}[section]
\usepackage{booktabs}
\usepackage{array}
\usepackage{vcell}
\newcommand{\Z}{\mathbb{Z}}

\newcommand{\R}{\mathbb{R}}
\newcommand{\C}{\mathbb{C}}
\newcommand{\N}{\mathbb{N}}

\newcommand{\e}{\operatorname{e}}

\usepackage{needspace}
\newcommand{\FA}{\mathcal F_A}
\newcommand{\ind}{\mathbf 1}

\newcommand{\eps}{\varepsilon}

\title[Exponential sums with multiplicative coefficients]{Effective estimates for exponential sums with multiplicative coefficients}
\author[N.~Robles]{Nicolas Robles}
\address{RAND Corporation, Engineering and Applied Sciences, Arlington, VA 22202, USA}
\email{robles.nicolas.m@gmail.com}
\subjclass[2020]{Primary 11L07; Secondary 11N36, 11N37, 42A20}
\keywords{Multiplicative functions, exponential sums, Brun-Titchmarsh inequality, maximal Fourier inequality, effective constants}

\begin{document}

\begin{abstract}
Let $f$ be multiplicative, with $|f(p)|\le A$ at primes and
$\sum_{n\le x}|f(n)|^2\le A^2x$ for every $x\ge1$.
If $|\alpha-a/q|\le q^{-2}$, $(a,q)=1$, and $3\le R\le q\le N/R$, we prove
\[
 \sum_{n\le N}f(n)\e(n\alpha)
 \ll_A \frac{N}{\log N}
       +\frac{N}{\sqrt R}\sqrt{\log\log(3R)}
\]
with effective implied constants. Montgomery and Vaughan
proved this with second term $NR^{-1/2}(\log R)^{3/2}$, and, for
$1$-bounded functions, Bachman replaced it by
$NR^{-1/2}\sqrt{\log R\log\log R}$. We remove the factor
$\sqrt{\log R}$ from Bachman's second term while retaining the original
coefficient hypotheses of Montgomery and Vaughan. A more precise estimate records the distance from a rational
number. The proof combines the Brun-Titchmarsh inequality
on short intervals with maximal Fourier estimates derived from the
Carleson-Hunt theorem; the local bounds permit arbitrary
prime-dependent prefixes. We also prove sharpness of the square-root
displacement dependence.
\end{abstract}

\maketitle

\section{Introduction}\label{sec:intro}

\subsection{Previous literature and new results}

Write $\N=\{1,2,\ldots\}$ and $\e(t)=\exp(2\pi i t)$. A multiplicative
function is a function $f:\N\to\C$ with $f(1)=1$ and $f(mn)=f(m)f(n)$
whenever $(m,n)=1$. For fixed $A\ge1$, let $\FA$ be the class of
multiplicative functions introduced by Montgomery and Vaughan
\cite[Section~1]{MV}, namely those satisfying
\begin{equation}\label{eq:class}
 |f(p)|\le A\quad(p\ \text{prime}),
 \qquad
 \sum_{n\le x}|f(n)|^2\le A^2x\quad(x\ge1).
\end{equation}
For $f\in\FA$, real $x\ge1$, and $\alpha\in\R$, put
\begin{equation}\label{eq:S-def}
 S_f(x,\alpha)=\sum_{n\le x}f(n)\e(n\alpha).
\end{equation}
The bounds below are uniform in $f$: the function may depend on both
the length and the phase.

An early result is Daboussi's theorem for multiplicative functions
with $|f(n)|\le1$: for each irrational $\alpha$, one has
$S_f(N,\alpha)=o(N)$ as $N\to\infty$; see
\cite[Theorem~1, p.~322]{D}, an account based on joint work with
Delange, and the note \cite{DD}, which its authors describe as the
starting point of \cite{MV} (\cite[p.~246]{DD2}). The detailed proofs
appeared in \cite{DD2}, whose Theorem~1 gives the same conclusion under
the mean-square condition $\sum_{n\le x}|f(n)|^2=O(x)$, without a
separate bound at primes, hence for every $f\in\FA$.
For a quantitative statement in $\FA$, we use the
formulation recorded and attributed to Daboussi in
\cite[equation~(3), p.~69]{MV}: if $|\alpha-a_0/q|\le q^{-2}$,
$(a_0,q)=1$, and $3\le q\le(N/\log N)^{1/2}$, then
$|S_f(N,\alpha)|\ll_A N(\log\log q)^{-1/2}$ uniformly for $f\in\FA$.
Montgomery and Vaughan
\cite[Corollary~1]{MV} sharpened it as follows: if
$|\alpha-a_0/q|\le q^{-2}$, $(a_0,q)=1$, and $2\le R\le q\le N/R$,
then
\begin{equation}\label{eq:old-mv-intro}
 |S_f(N,\alpha)|
 \ll_A \frac{N}{\log N}
       +\frac{N}{\sqrt R}(\log R)^{3/2}.
\end{equation}
For multiplicative functions satisfying $|f(n)|\le1$ for every $n$,
Bachman \cite[Theorem~5]{Bachman} replaced the second term by
\begin{equation}\label{eq:bachman-R}
 \frac{N}{\sqrt R}\sqrt{\log R\,\log\log R}\qquad(R\ge3).
\end{equation}
The term $N/\log N$ cannot be uniformly decreased in this class;
see \cite[Section~7]{MV}. The question of the optimal dependence on
$R$ is discussed in \cite[Section~1]{BBK}. Here the improvement is
relative to the specified estimates \eqref{eq:old-mv-intro} and
\eqref{eq:bachman-R}; no complete optimality statement for $R$ is made.
We remove $\sqrt{\log R}$ from the term \eqref{eq:bachman-R}, retain
the original class \eqref{eq:class}, and obtain effective constants.
The more precise statement records the displacement from a rational
number. Throughout, $\varphi$ denotes Euler's totient function, with
$\varphi(1)=1$.

\begin{theorem}\label{thm:main}
Let $A\ge1$, $f\in\FA$, and $N\ge3$. Put
\begin{equation}\label{eq:Q0}
 L=\log N,\qquad Q_0=N/L^3.
\end{equation}
Suppose that $a\in\Z$ and $r\in\N$ satisfy
\begin{equation}\label{eq:main-approx}
 (a,r)=1,\qquad 1\le r\le Q_0,\qquad
 \alpha=\frac ar+\beta,\qquad |\beta|\le\frac1{rQ_0}.
\end{equation}
With $B=\max\{1,N|\beta|\}$, one has
\begin{equation}\label{eq:main-bound}
 |S_f(N,\alpha)|
 \ll_A \frac{N}{\log N}
       +\frac{N}{\sqrt{\varphi(r)B}}.
\end{equation}
The bound is uniform in $f,N,a,r,\beta$ subject to these hypotheses,
and the implied constant is effective.
\end{theorem}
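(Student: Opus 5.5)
We follow the Montgomery–Vaughan strategy: an inequality of Turán–Kubilius/Elliott type reduces $S_f(N,\alpha)$ to bilinear sums over primes, and we then estimate those sums more sharply. The first step is the standard reduction, with effective constants, for $f\in\FA$ and $L=\log N$:
\[
 S_f(N,\alpha)\,\log N \;=\; \sum_{pm\le N} f(p)f(m)\,\e(pm\alpha)\,\log p \;+\; O_A\Bigl(N+\text{error}\Bigr).
\]
This uses $\log n=\sum_{p^k\| n}\log p^k$, multiplicativity, and the mean-square hypothesis in \eqref{eq:class} to discard prime powers and non-coprime pairs. Dividing by $\log N$ gives the term $N/\log N$. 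It therefore suffices to show that the bilinear sum
\[
 T=\sum_{m\le N/2}f(m)\sum_{p\le N/m}f(p)\log p\,\e(pm\alpha)
\]
satisfies $T\ll_A N\log N/\sqrt{\varphi(r)B}+N$.

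Next we split the primes dyadically, $P<p\le 2P$, and apply Cauchy–Schwarz in $m$. The $m$-side costs at most $\sum_{m\le N/P}|f(m)|^2\le A^2N/P$. The prime side has the usual ``arbitrary prime-dependent prefix'' form: for each $m$, the inner sum over $p$ is truncated at $N/m$ and weighted by $f(p)$. After expanding the square we face
\[
 \sum_{P<p_1,p_2\le 2P}\log p_1\log p_2\,\Bigl|\sum_{m\le N/\max(p_1,p_2)}\e\bigl(m(p_1-p_2)\alpha\bigr)\Bigr|
 \;\ll\; \sum_{p_1,p_2}\log p_1\log p_2\,\min\Bigl(\frac NP,\ \frac1{\|(p_1-p_2)\alpha\|}\Bigr).
\]
The sharp cutoff $m\le N/\max(p_1,p_2)$ is exactly where the maximal Fourier (Carleson–Hunt) estimate enters. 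We bound the supremum over truncation points of the partial sums, in $L^2$, by a constant times the full $L^2$ norm. This removes the logarithmic loss that Montgomery–Vaughan incurred from the truncation via Perron-type smoothing.

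For the counting we write $h=p_1-p_2$, so that $|h|\le P$. We count pairs of primes with difference $h$ using the Brun–Titchmarsh / Selberg sieve upper bound, which gives at most $\ll P\,\mathfrak S(h)/\log^2 P$ pairs with singular-series weight $\mathfrak S(h)\ll\prod_{p\mid h}(1+1/p)$. For $\alpha=a/r+\beta$, the sum $\sum_{|h|\le P}\mathfrak S(h)\min(N/P,\|h\alpha\|^{-1})$ is then bounded by splitting $h$ into residue classes modulo $r$. The classes with $r\nmid h$ contribute roughly $\sum_j r/j$-type terms, and the class $r\mid h$ is controlled by the $\beta$-displacement, giving the factor $\min(N/P,\,1/(|h||\beta|))$ for $r\mid h$. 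Averaging $\mathfrak S(h)$ over $r\mid h$ produces $r/\varphi(r)$. This explains the appearance of $\varphi(r)$, which we record through $\sum_{h\equiv0\,(r)}\mathfrak S(h)\approx (r/\varphi(r))\cdot(\text{count})$.

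The squared dyadic contribution is then
\[
 \ll \frac NP\Bigl(\frac{P^2}{\varphi(r)B}+P\,(r+N/r)\log\Bigr)
\]
or similar. Summing over $P$ and taking square roots gives $N\log N/\sqrt{\varphi(r)B}$ for $P$ in the middle range, where the displacement and the $r$-periodicity dominate. The small-$P$ and large-$P$ ranges are handled by the trivial bound together with $r\le Q_0=N/L^3$, and this is what absorbs the $(\log N)^3$ losses.

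The main obstacle is the middle step: organizing the dyadic sums so that no $\sqrt{\log}$ loss accumulates over the $\log N$ dyadic ranges. A naive sum over $P$ costs a factor $\log N/\log P$ in each range and reproduces Bachman's $\sqrt{\log R}$. We would first try a weighted Cauchy–Schwarz across dyadic ranges, with weights $1/\log P$ coming from Brun–Titchmarsh. The goal is to make the sum over $P$ telescope into $\sum_P 1/\log P\cdot\log P/P$, a convergent geometric-type contribution. If this fails, the fallback is to exploit the uniformity of the Carleson–Hunt maximal bound to treat all $P\le\sqrt N$ in one block.
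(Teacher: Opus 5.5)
There is a genuine gap, and it is exactly where the theorem improves on Bachman. After Cauchy--Schwarz in $m$, expanding the square and taking absolute values, look at the class $r\mid h$. Write $h=rk$ with $1\le k\le P/r$. Then $\|h\alpha\|=rk|\beta|$ as long as $P\le N/(2B)$. The singular-series weight $\mathfrak S(h)$ averages to $\asymp r/\varphi(r)$ on this class, so your bound for its contribution is of order
\[
 \frac{rP}{\varphi(r)}\sum_{1\le k\le P/r}\min\Big(\frac NP,\frac1{rk|\beta|}\Big)
 \asymp\frac{PN}{\varphi(r)B}\bigl(1+\log B\bigr)
 \qquad(D\le P\le N/(2B)).
\]
The reason is that the terms with $P/(rB)<k\le P/r$ form a harmonic sum over a range of ratio $B$. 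This gives $|T_P|\ll_A N\sqrt{\log(2B)/(\varphi(r)B)}$ on essentially all $\asymp L$ dyadic blocks. After division by $L$ you recover essentially Bachman's estimate \eqref{eq:bachman-B}, with the $\sqrt{\log(2B)}$ that the theorem removes, not \eqref{eq:main-bound}. The harmonic sum is the true size of $\sum_k\min(N/P,1/(rk|\beta|))$, so this loss is built into the inequality you reduce to.

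The obstacle you name is not the real one. A lossless per-block bound $N/\sqrt{\varphi(r)B}$, summed over $O(L)$ blocks, is exactly $L$ times the target, and $L$ is what you divide by. So no weighted Cauchy--Schwarz across dyadic ranges is needed, and none would touch the harmonic sum. Nor does Carleson--Hunt enter your argument: once the square is expanded, the $m$-sum is a geometric series bounded by $\min(N/P,(2\|h\alpha\|)^{-1})$ whatever the cutoff.

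The missing idea is to put Cauchy--Schwarz on the prime side and never expand the integer square. This gives $|T(P)|^2\ll A^2P\,Q_P^*(c)$ with $Q_P^*(c)$ as in \eqref{eq:Qstar-def}, where $M_c$ is the maximal prefix function of $c_n=f(n)$, $n\le U=\lfloor N/P\rfloor$. The moving cutoff $n\le N/p$ is absorbed into $M_c$, and this is where Carleson--Hunt (Proposition~\ref{prop:CH}) is used. The samples at primes are then controlled as follows.
\begin{itemize}
\item Split $(P,2P]$ into $\lceil B\rceil$ intervals of length $h=P/\lceil B\rceil$, on which $\beta t$ moves by at most $1/U$.
\item Brun--Titchmarsh (Proposition~\ref{prop:BT}) bounds the $\log p$-mass of each reduced class modulo $r$ in each interval by $\ll h/\varphi(r)$. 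This is lossless because $P\ge16D^2$ forces $\log(2h/r)\asymp\log P$.
\item Lemma~\ref{lem:variation}, applied to each fixed prefix polynomial with $d_n=nc_n$ controlling the derivative, turns the samples into integrals over $t$.
\item For $B>1$, Lemma~\ref{lem:covering} shows that the arcs $ab/r+\beta t$ cover the circle with total weight $\ll rP+|\beta|^{-1}=rP+N/B$.
\end{itemize}
Hence $|T(P)|\ll_A N/\sqrt{\varphi(r)B}+\sqrt{rNP/\varphi(r)}$. The displacement enters through a covering count rather than through a sum of $1/\|h\alpha\|$.

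Your edge ranges also need repair. For $N/r<P<N$ the trivial bound gives $O_A(N)$ on each of $\asymp\log r$ blocks. That total exceeds $N+NL/\sqrt{\varphi(r)B}$ when, for instance, $B=1$ and $r\asymp L^{5/2}$. The paper instead uses \eqref{eq:maximal-reduced-residues} to get $N\sqrt{\log\log(3U)/U}$ per block, which sums to $O_A(N)$. The primes $p\le Y\asymp D^2$ are removed by the original Montgomery--Vaughan bound (Lemma~\ref{lem:small}). The range $r\ge L^3$ is handled directly by Proposition~\ref{prop:MV}.
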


\begin{corollary}\label{cor:R}
Let $A\ge1$, $f\in\FA$, and $N\ge3$. Suppose that $a_0\in\Z$, $q\in\N$,
and $R\in\R$ satisfy
\begin{equation}\label{eq:R-approx}
 (a_0,q)=1,\qquad
 \bigg|\alpha-\frac{a_0}{q}\bigg|\le q^{-2},\qquad
 3\le R\le q\le N/R.
\end{equation}
Then, uniformly in these parameters,
\begin{equation}\label{eq:R-bound}
 |S_f(N,\alpha)|
 \ll_A \frac{N}{\log N}
       +\frac{N}{\sqrt R}\sqrt{\log\log(3R)}.
\end{equation}
The implied constant is effective.
\end{corollary}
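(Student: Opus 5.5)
The plan is to deduce Corollary~\ref{cor:R} from Theorem~\ref{thm:main}. I would produce a rational approximation $a/r$ at the scale $Q_0$ and then show, by comparing it with $a_0/q$, that $\varphi(r)B\gg R/\log\log(3R)$ in every case. The only external input is the effective bound $\varphi(n)\ge c\,n/\log\log(3n)$ for all $n\ge1$, with an absolute constant $c>0$ (for instance via Rosser--Schoenfeld). I will also use that $t\mapsto t/\log\log(3t)$ is increasing for $t\ge1$.

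\emph{Reduction.} If $Q_0=N/L^3<1$, then $N$ is bounded by an absolute constant. In that case the trivial bound $|S_f(N,\alpha)|\le \sum_{n\le N}|f(n)|\le A N$ (Cauchy--Schwarz with \eqref{eq:class}) gives $|S_f(N,\alpha)|\ll_A N/\log N$, and we are done. So assume $Q_0\ge1$. Dirichlet's theorem applied with parameter $Q_0$ gives $a\in\Z$ and $r\in\N$ with $(a,r)=1$, $r\le Q_0$, and $\alpha=a/r+\beta$ with $|\beta|\le 1/(rQ_0)$. This is exactly \eqref{eq:main-approx}, so \eqref{eq:main-bound} holds with $B=\max\{1,N|\beta|\}$. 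It remains to show $\varphi(r)B\gg R/\log\log(3R)$, since then $N/\sqrt{\varphi(r)B}\ll N R^{-1/2}\sqrt{\log\log(3R)}$.

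\emph{Case analysis.}
\begin{itemize}
\item Case 1: $r\ge R$. Monotonicity gives $\varphi(r)\ge c\,r/\log\log(3r)\ge c\,R/\log\log(3R)$, and $B\ge1$. This case includes $a/r=a_0/q$, because then $r=q\ge R$.
\item Case 2: $r<R$. Then $r<q$, so $a/r\ne a_0/q$. Hence
\[
 \frac1{rq}\le\Big|\frac ar-\frac{a_0}q\Big|\le|\beta|+\frac1{q^2}.
\]
Since $r<R\le q$ we have $1/q^2\le 1/(rq)\cdot(r/q)$. I would split on the size of $|\beta|$:
\begin{itemize}
\item If $|\beta|\le 1/(2rq)$, the display forces $1/(2rq)\le 1/q^2$, that is $r\ge q/2\ge R/2$. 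Case 1's argument then applies up to a constant factor.
\item If $|\beta|>1/(2rq)$, then $B\ge N|\beta|>N/(2rq)\ge R/(2r)$, using $q\le N/R$. Therefore
\[
 \varphi(r)B\ge \frac{c\,r}{\log\log(3r)}\cdot\frac{R}{2r}\ge \frac{c\,R}{2\log\log(3R)},
\]
using $r<R$.
\end{itemize}
\end{itemize}
In all cases $\varphi(r)B\ge c' R/\log\log(3R)$ with an absolute effective $c'$. Inserting this into \eqref{eq:main-bound} gives \eqref{eq:R-bound}, and effectivity is inherited from Theorem~\ref{thm:main}.

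\emph{Expected difficulty.} The deduction is mechanical, so the only delicate point is the bookkeeping in Case 2. There one must check that the small-$\beta$ alternative really forces $r\gg q$, rather than leaving an intermediate range. One must also check that the loss is only $\log\log$ rather than $\log$. This holds because $\varphi(r)$ and the gain $R/r$ from $B$ multiply to $R\,\varphi(r)/r$, and $\varphi(r)/r\gg 1/\log\log(3r)$. No condition relating $q$ to $Q_0$ is needed, since the approximation $a/r$ is chosen independently of $a_0/q$.
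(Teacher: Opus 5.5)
Your proposal is correct and follows the paper's proof: you choose $a/r$ by Dirichlet at scale $Q_0$, apply Theorem~\ref{thm:main}, and show $\varphi(r)B\gg R/\log\log(3R)$ by the same case analysis. Your split on $|\beta|$ against $1/(2rq)$ when $r<R$ is the contrapositive of the paper's split at $r<R/2$, where it derives $|\beta|\ge 1/(2rq)$.

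One small correction: $t\mapsto t/\log\log(3t)$ is not increasing on all of $t\ge1$. Its derivative has the sign of $\log\log(3t)-1/\log(3t)$, which is negative until $t\approx1.94$. You only need monotonicity for $t\ge R\ge3$ in Case~1, where it holds. In the subcase $R/2\le r<R$ you can avoid it by using $\log\log(3r)\le\log\log(3R)$ together with $r\ge R/2$.
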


Both results are unconditional. Effective means that a bound for the
constant can in principle be computed from $A$, not that a numerical
value is supplied; see Section~\ref{sec:effectivity}.

The hypotheses \eqref{eq:class} allow unbounded functions. For example,
put $w(n)=\sigma(n)/n=\sum_{d\mid n}d^{-1}$, where
$\sigma(n)=\sum_{d\mid n}d$. Factoring divisors of coprime products
shows that $w$ is multiplicative, and $w(p)=1+1/p\le3/2$. For every
real $x\ge1$,
\[
 \sum_{n\le x}w(n)^2
 =\sum_{d,e\ge1}\frac1{de}\Big\lfloor\frac x{[d,e]}\Big\rfloor
 \le x\sum_{g\ge1}\frac1{g^3}\sum_{a,b\ge1}\frac1{a^2b^2}
 \le\frac{245}{64}x<4x.
\]
Here $[d,e]$ denotes the least common multiple; the inequality follows
by writing $d=ga$, $e=gb$, $(a,b)=1$, and dropping the coprimality
condition. The bounds $\sum_{m\ge1}m^{-2}\le7/4$ and
$\sum_{m\ge1}m^{-3}\le5/4$ follow by comparing the tails from $m=3$
with the respective integrals over $[2,\infty)$. Thus
$w\in\mathcal F_2$, although
$w(m!)\ge\sum_{d\le m}d^{-1}\to\infty$. More generally, if
$f\in\FA$ and $u$ is a pointwise $1$-bounded multiplicative function,
then $fu\in\FA$ with the same $A$, since both inequalities in
\eqref{eq:class} are preserved. In particular, $wu\in\mathcal F_2$
uniformly in $u$. Applying \cite[Lemma~1]{DD2} to $g=|f|^2$ also gives, for every $M>1$,
\[
 \liminf_{x\to\infty}\frac{1}{\log\log x}
 \sum_{\substack{p\le x\\ |f(p)|^2<M}}\frac1p
 \ge 1-\frac1M.
\]
This example illustrates the coefficient hypotheses;
no comparison with specialized estimates for this weight is intended.

Related work extends the coefficients and describes the structure of
large sums. Jiang, L\"u, and Wang \cite[conditions (C.1)-(C.3) and
Theorem~1.1]{JLW} replace bounded prime values by averaged
prime-coefficient conditions and a weighted prime-pair sieve hypothesis,
with automorphic applications. Our global theorem retains
\eqref{eq:class}; the more general local estimates below do not by
themselves extend it to that framework.
De la Bret\`eche and Granville \cite[Theorem~4 and Section~10.1]{dBG}
describe large sums for $1$-bounded completely multiplicative functions
through character twists. Their refined conjecture
\cite[equation~(1.4)]{dBG} has $(r(1+N|\beta|))^{-1/2}$ in place of
$(\varphi(r)B)^{-1/2}$, and their main-term analysis already exhibits
square-root displacement decay.
Granville and Lamzouri \cite[Theorem~1.1, Corollary~1.1, Section~1.4]{GL}
work with $1$-bounded multiplicative functions and link large sums to
additive bias on large primes for $|\alpha-a/q|\le(qN)^{-1}$ and
$(\log N)^{2+\eps}\le q\le N/(\log N)^{3+\eps}$, $0<\eps<1/10$.
They summarize $N/\log N+N/\sqrt q$ as proved outside
$q=(\log N)^{2+o(1)}$, and there if $\varphi(q)\gg q$. Our effective
displacement result retains $\FA$; the optimal factor remains open.
Maier and Sankaranarayanan \cite[Theorem~1.2]{MS05} reprove, with
Siegel-Walfisz, the bound $x/\log x+x/\sqrt r$ for $|f|\le1$,
$\alpha=s/r$ with $r$ prime and $(s,r)=1$, and
$r\le x/((\log x)^2\log\log x\,\log\log\log x)$ for large $x$.
This case of Bachman's Theorems~4 and~5 follows from
Theorem~\ref{thm:main} and Corollary~\ref{cor:R}.

\subsection{Comparison and the logarithmic transition}

Under \eqref{eq:main-approx}, Bachman's Theorem~4 gives
$|S_f(N,\alpha)|\ll N/L+N/\sqrt{\varphi(r)}$ for $1$-bounded
multiplicative functions, and the estimate (3.1) in his proof records
the displacement dependence
\begin{equation}\label{eq:bachman-B}
 |S_f(N,\alpha)|\ll \frac{N}{L}
   +\frac{N}{\sqrt{\varphi(r)B}}\sqrt{\log(2B)};
\end{equation}
see \cite[Theorem~4 and equation~(3.1)]{Bachman}. The gain in
Theorem~\ref{thm:main} is the removal of $\sqrt{\log(2B)}$, not the
case $B=1$. The common term $N/L$ means that the whole estimate need
not improve by an unbounded factor in every range;
Remark~\ref{rem:comparison} below gives a rational family on which it
does, for Theorem~\ref{thm:main} against \eqref{eq:bachman-B} and for
Corollary~\ref{cor:R} against \eqref{eq:old-mv-intro} and
\eqref{eq:bachman-R}.

Writing $L=\log N$, the second term in \eqref{eq:old-mv-intro} is
$O(N/L)$ when $R$ has order $L^2(\log L)^3$. Bachman's refinement
\eqref{eq:bachman-R} reaches that scale when $R$ has order
$L^2\log L\,\log\log L$, whereas Corollary~\ref{cor:R} requires only
$R$ of order $L^2\log\log L$. At each scale,
$\log R\asymp\log L$ and $\log\log R\asymp\log\log L$, so the claims
follow by substitution as $N\to\infty$. These are sufficient scales
for the displayed bounds, not necessary conditions for every method
or choice of rational approximation.

\begin{remark}\label{rem:comparison}
The following family shows that the whole bound can improve by an
unbounded factor. The comparison is restricted to $1$-bounded
multiplicative functions,
the class common to the bounds discussed here. Let $r=\prod_{p\le z}p$
run through the primorials, and put
\[
 H=\frac r{\varphi(r)},\qquad N=\lfloor\exp(\sqrt{rH})\rfloor,\qquad
 L=\log N,
\]
so that $L^2\sim rH$. The Euler product for $H$ contains the reciprocal
of every integer at most $z$, so $H\ge\sum_{n\le z}1/n\to\infty$, while
$H\ll\log\log(3r)$ by \eqref{eq:totient}. Let $M$ be the least integer
with $M\equiv-1\pmod r$ and $M\ge N/(rH^2)$; its difference from
$N/(rH^2)$ is less than $r$, and $r^2H^2/N\to0$, so $M\sim N/(rH^2)$.
Put
\[
 \alpha=\frac{(M+1)/r}M=\frac1r+\frac1{rM}.
\]
For the approximation $a/r=1/r$ one has $\beta=1/(rM)$,
$B=N/(rM)\sim H^2>1$, and
\[
 \varphi(r)B=\frac N{HM}\sim rH\sim L^2.
\]
Moreover $r/Q_0\to0$ and $Q_0/M\sim rH^2/L^3\sim H/L\to0$, so
$r\le Q_0$, $M\ge Q_0$, and $|\beta|\le1/(rQ_0)$: the hypotheses
\eqref{eq:main-approx} hold, and Theorem~\ref{thm:main} gives
$|S_f(N,\alpha)|\ll_A N/L$. The right-hand side of
\eqref{eq:bachman-B} at the same approximant is
$\asymp(N/L)\sqrt{\log(2H)}$, which exceeds $N/L$ by an unbounded
factor, and $1/r$ is the only admissible approximant: if a reduced
fraction $b/s\ne1/r$ satisfied $s\le Q_0$ and $|\alpha-b/s|\le1/(sQ_0)$,
then
\[
 \frac1{rs}\le\bigg|\frac bs-\frac1r\bigg|\le\frac1{sQ_0}+\frac1{rM},
 \qquad\text{so that}\qquad
 1\le\frac r{Q_0}+\frac sM\le\frac r{Q_0}+\frac{Q_0}M=o(1),
\]
a contradiction.

The same family separates the total bounds in the form of
Corollary~\ref{cor:R}. The fraction $\alpha=((M+1)/r)/M$ is reduced,
since a common divisor of $(M+1)/r$ and $M$ divides both $M$ and
$M+1$. Let $b/q$ be a reduced fraction with $|\alpha-b/q|\le q^{-2}$.
If $b/q\ne\alpha$, then $1/(Mq)\le|\alpha-b/q|\le q^{-2}$ gives
$q\le M$; if moreover $q>2r$, then $b/q\ne1/r$ and
\[
 \frac1{rq}\le\bigg|\frac bq-\frac1r\bigg|\le\frac1{q^2}+\frac1{rM},
 \qquad\text{so that}\qquad 1\le\frac rq+\frac qM,
\]
whence $q>M/2$. Thus every admissible denominator satisfies $q\le2r$
or $M/2<q\le M$. We also need $H\asymp\log\log r$: the upper bound is
\eqref{eq:totient}, and $H\ge\sum_{n\le z}1/n\ge\log z$ together with
$\log r=\sum_{p\le z}\log p\le z\log z$ gives
$\log\log r\le2\log z\le2H$ for $z\ge3$.

Corollary~\ref{cor:R} applies with $a_0/q=\alpha$, $q=M$, and
$R=N/M$, since $M^2\ge N$ for large $r$. As
$\sqrt{NM}\sim N/(H\sqrt r)\sim N/(L\sqrt H)$ and
$\log\log(3N/M)\le\log(2\log r)+o(1)\ll H$, its second term satisfies
\[
 \sqrt{NM}\sqrt{\log\log(3N/M)}\ll\frac NL,
\]
so that \eqref{eq:R-bound} gives $|S_f(N,\alpha)|\ll_AN/L$. In
\eqref{eq:old-mv-intro} and \eqref{eq:bachman-R}, on the other hand,
let $(a_0/q,R)$ be any admissible choice, so that $R\le2r$ if
$q\le2r$, and $R\le N/q<2N/M$ if $M/2<q\le M$. The functions $(\log t)^3/t$ and $\log t\log\log t/t$ decrease for
$t\ge21$. For $2\le R<21$ in \eqref{eq:old-mv-intro}, and for
$3\le R<21$ in \eqref{eq:bachman-R}, the respective second term is
bounded below by a positive absolute multiple of $N$.
For $R\ge21$, the denominator dichotomy and monotonicity show that
Bachman's second term is bounded below by a positive absolute
multiple of
\[
 \begin{cases}
 (N/L)\sqrt{H\log r\log\log r},&q\le2r,\\
 (N/L)\sqrt{\log r\log\log r/H},&M/2<q\le M.
 \end{cases}
\]
Indeed these are its orders at $R=2r$ and $R=2N/M$, respectively.
Both are $\gg(N/L)\sqrt{\log r}$ because $H\asymp\log\log r$.
The same comparison for \eqref{eq:old-mv-intro} gives a second term
$\gg(N/L)(\log r)^{3/2}/\sqrt H$ for every admissible choice.
Consequently, when each displayed right-hand side is regarded as a
positive comparison expression with its implicit constant omitted,
its infimum over the corresponding admissible $(a_0/q,R)$ has order
\[
 \frac NL\frac{(\log r)^{3/2}}{\sqrt H},\qquad
 \frac NL\sqrt{\log r},\qquad \frac NL,
\]
for \eqref{eq:old-mv-intro}, \eqref{eq:bachman-R}, and
\eqref{eq:R-bound}, respectively. These orders are achieved, up to
absolute constant factors, by $q=M$ and $R=N/M$.
For this denominator, $N/q\sim rH^2\asymp L^2\log\log L$.
This compares direct substitutions into the specified expressions at
the same length $N$. It is not a lower bound for the actual sum, nor
a claim that every other estimate or indirect combination of methods
has the same limitation.
\end{remark}

\subsection{Overview of the proof}

A logarithmic identity reduces the problem to prime-integer bilinear
sums, and the original Montgomery-Vaughan bound removes the small
primes. In a remaining block $P<p\le2P$, Cauchy-Schwarz leaves a
nonnegative prime-weighted square. Brun-Titchmarsh bounds the prime
mass in reduced residue classes on intervals of length comparable
to $P/B$. An elementary variation inequality transfers this bound
to continuous maximal Fourier expressions. Carleson-Hunt and a
short-sequence estimate over reduced residues control those expressions
without a logarithmic loss. Propositions~\ref{prop:effective-maximal-prime}
and~\ref{prop:arbitrary-prefixes} allow arbitrary coefficient vectors
and prime-dependent prefixes; the global application uses only
$n\le N/p$. No stronger prime-pair sieve is required.
Section~\ref{sec:mechanism} compares these choices with the arguments
of Montgomery-Vaughan and Bachman and identifies the logarithmic
sums that occur in the quoted estimates.

Bachman also applies Cauchy-Schwarz in the prime variable. His
completed weight is treated by a Siegel-Walfisz asymptotic, introducing
an ineffective constant \cite[equations~(2.6)-(2.7)]{Bachman}.
His Theorems~4 and~5 assume $|f|\le1$; boundedness enters both
Lemma~1 and the passage to (3.1), for example in
\cite[equations~(2.6), (2.14), and~(3.1)]{Bachman}.
Our prime-block argument instead bounds a nonnegative expression
using Brun-Titchmarsh, without a progression asymptotic; see
Section~\ref{sec:mechanism}.

The term $N/L$ cannot be uniformly decreased throughout $\FA$:
Montgomery and Vaughan \cite[Section~7, example~(i)]{MV} construct,
for each prescribed $N,\alpha$, a completely multiplicative
$1$-bounded function with $|S_f(N,\alpha)|\gg N/L$.
Proposition~\ref{prop:sharpness} shows that $B^{-1/2}$ is also sharp
in the stated uniform setting. Neither example settles the optimal
totient dependence for multiplicative coefficients; see
Remark~\ref{rem:totient-obstruction}.
The displacement dependence in \eqref{eq:main-bound} also yields, in
Corollary~\ref{cor:simultaneous}, an effective bound
$\ll_A(\log N)^4/N$ for the measure of a single exceptional set outside
which $|S_f(N,\alpha)|\le c_AN/\log N$ holds simultaneously for every
$f\in\FA$.

\subsection{Notation}\label{sec:notation}

Sums with real upper limits run over positive integers up to their
floor; upper limits below $1$ give empty sums. A dyadic number is a
power of two, $(m,n)$ is the greatest common divisor, and
$\ind_{\mathcal E}$ is $1$ when the indicated condition holds and $0$
otherwise. We write $X\ll_A Y$, for $Y\ge0$, when $|X|\le C(A)Y$ with
a constant depending only on $A$; an omitted subscript denotes an
absolute constant. For positive quantities, $X\asymp Y$ means
$X\ll Y\ll X$, and $X\sim Y$ means $X/Y\to1$ in the limit considered.
The notation $O_A(Y)$ has the same constant dependence as $\ll_A Y$.

\subsection{Organization of the paper}

Sections~\ref{sec:inputs}-\ref{sec:fourier} collect the imported
estimates and the new lemmas. Section~\ref{sec:mechanism} compares
the earlier arguments with the choices used here.
Section~\ref{sec:effective-sieve} contains the prime-block estimates,
Section~\ref{sec:proof} the proofs of the main results and the
exceptional-set corollary, and Section~\ref{sec:sharpness} treats
sharpness before the conclusion and the directions for future work.

\section{Preliminaries and imported estimates}\label{sec:inputs}

We first record the external inputs used in the main proof.
Their constants are effective. The new estimates
\eqref{eq:main-bound} and \eqref{eq:R-bound} are not used in any of the
reductions in this section or in Section~\ref{sec:small}.

\begin{lemma}\label{lem:elementary}
For $f\in\FA$ and $x\ge1$,
\begin{equation}\label{eq:l1-l2}
 \sum_{n\le x}|f(n)|\le Ax,
 \qquad
 \sum_{n\ge1}\frac{|f(n)|^2}{n^{4/3}}\le4A^2.
\end{equation}
For $x\ge2$,
\begin{equation}\label{eq:prime-elementary}
 \vartheta(x):=\sum_{p\le x}\log p\ll x,
 \qquad
 \sum_{p\le x}\frac{\log p}{p}\ll\log(2x).
\end{equation}
For every integer $m\ge1$,
\begin{equation}\label{eq:totient}
 \frac m{\varphi(m)}\ll\log\log(3m).
\end{equation}
\end{lemma}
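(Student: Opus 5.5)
The plan is to treat the four estimates separately, since each is elementary.

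For the first half of \eqref{eq:l1-l2}, apply Cauchy--Schwarz together with the mean-square hypothesis in \eqref{eq:class}:
\[
 \sum_{n\le x}|f(n)|\le \Big(\lfloor x\rfloor\sum_{n\le x}|f(n)|^2\Big)^{1/2}\le (x\cdot A^2x)^{1/2}=Ax.
\]

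For the second half, put $T(x)=\sum_{n\le x}|f(n)|^2\le A^2x$ and use partial summation. This gives
\[
 \sum_{n\ge1}\frac{|f(n)|^2}{n^{4/3}}=\frac43\int_1^\infty T(t)t^{-7/3}\,dt\le\frac43A^2\int_1^\infty t^{-4/3}\,dt=4A^2 .
\]
The boundary term vanishes because $T(t)t^{-4/3}\le A^2t^{-1/3}\to0$. Since $T(t)\le A^2t$ already holds for all real $t\ge1$, no case distinction is needed.

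For \eqref{eq:prime-elementary}, I would use Chebyshev's elementary bound. Every prime in $(m,2m]$ divides $\binom{2m}{m}\le4^m$, so $\vartheta(2m)-\vartheta(m)\le m\log4$. Telescoping over dyadic scales, with integer rounding absorbed into the constant, gives $\vartheta(x)\le x\log 16+O(1)\ll x$.

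The Mertens-type bound then follows by partial summation from $\vartheta(t)\ll t$:
\[
 \sum_{p\le x}\frac{\log p}{p}=\frac{\vartheta(x)}x+\int_2^x\frac{\vartheta(t)}{t^2}\,dt\ll 1+\log x\ll\log(2x).
\]

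For \eqref{eq:totient}, write
\[
 \frac m{\varphi(m)}=\prod_{p\mid m}\Big(1-\frac1p\Big)^{-1}.
\]
Let $k=\omega(m)$. Replacing the prime divisors of $m$ by the first $k$ primes $p_1<\dots<p_k$ only increases the product, and $p_1\cdots p_k\le m$. The main step is to bound $\prod_{j\le k}(1-1/p_j)^{-1}$ by $\ll\log p_k$. To do this, take logarithms, use $-\log(1-1/p)\le 1/p+2/p^2$, and bound $\sum_{p\le y}1/p\le\log\log y+O(1)$. That last estimate follows by partial summation from the Mertens bound just proved.

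It remains to show $\log p_k\ll\log\log(3m)$. For this I would use a Chebyshev lower bound $\vartheta(y)\gg y$ for $y\ge2$, which comes from $\binom{2m}m\ge4^m/(2m+1)$ and the fact that prime powers dividing it are at most $2m$. This gives $\log m\ge\vartheta(p_k)\gg p_k$, hence $\log p_k\le\log\log m+O(1)$. Small $m$, with $k\le1$, are absorbed into the constant, and the $3$ in $\log\log(3m)$ keeps the right side positive.

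I expect this last step to be the only mildly delicate point. It needs both the upper and the lower Chebyshev estimates, whereas everything else is one line of Cauchy--Schwarz or partial summation. All constants are explicit, so the effectivity claimed in the section holds automatically.
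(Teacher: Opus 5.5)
\paragraph{Comparison with the paper.} Your proofs of \eqref{eq:l1-l2} and \eqref{eq:prime-elementary} follow the paper essentially line for line: Cauchy--Schwarz, partial summation against $\sum_{n\le t}|f(n)|^2$, divisibility of $\binom{2m}{m}$ summed over dyadic scales, and the identity $\sum_{p\le x}\log p/p=\vartheta(x)/x+\int_2^x\vartheta(t)t^{-2}\,dt$. For \eqref{eq:totient} the paper gives no proof; it cites Koukoulopoulos (Corollary~3.6). Your attempt to derive it from the Chebyshev bounds above has a genuine gap.

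\paragraph{The gap.} The failing step is the claim that $\sum_{p\le y}1/p\le\log\log y+O(1)$ follows by partial summation from the Mertens-type bound you proved. That bound is only $\sum_{p\le x}\log p/p\le C\log x+O(1)$, where $C$ is the constant in $\vartheta(t)\le Ct$ (here $C=\log16$). Partial summation then gives only $\sum_{p\le y}1/p\le C\log\log y+O(1)$. Exponentiating yields $\prod_{p\le y}(1-1/p)^{-1}\ll(\log y)^{C}$, and hence only $m/\varphi(m)\ll(\log\log(3m))^{C}$.

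The exponent is exactly what the paper needs. It feeds Lemma~\ref{lem:short-residue} and produces the factor $\sqrt{\log\log(3R)}$ in Corollary~\ref{cor:R}, so any constant $C>1$ at this point worsens the final result.

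\paragraph{The fix.} What is missing is Mertens' first theorem with leading coefficient $1$. Legendre's formula gives
\[
 \log\lfloor x\rfloor!=\sum_{p^j\le x}\Big\lfloor\frac{x}{p^j}\Big\rfloor\log p
 =x\sum_{p\le x}\frac{\log p}{p}+O(\vartheta(x))+O(x),
\]
while $\log\lfloor x\rfloor!=x\log x+O(x)$. Combined with your bound $\vartheta(x)\ll x$, this gives $\sum_{p\le x}\log p/p=\log x+O(1)$. Partial summation then does give $\sum_{p\le y}1/p=\log\log y+O(1)$, and so $\prod_{p\le y}(1-1/p)^{-1}\ll\log y$.

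With this inserted, or with the citation the paper uses, the rest of your argument is correct. That includes passing to the first $\omega(m)$ primes, and deducing $p_k\ll\vartheta(p_k)\le\log m$ from the lower Chebyshev bound $\vartheta(y)\gg y$.
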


\begin{proof}
Cauchy's inequality and \eqref{eq:class} give
$\sum_{n\le x}|f(n)|\le \lfloor x\rfloor^{1/2}(A^2x)^{1/2}\le Ax$.
If $M_2(t)=\sum_{n\le t}|f(n)|^2$, partial summation gives
\[
 \sum_{n\ge1}\frac{|f(n)|^2}{n^{4/3}}
 =\frac43\int_1^\infty \frac{M_2(t)}{t^{7/3}}\,dt
 \le\frac43 A^2\int_1^\infty t^{-4/3}\,dt=4A^2.
\]
For an integer $m\ge1$, the product of primes in $(m,2m]$ divides
$\binom{2m}{m}$, so
$\vartheta(2m)-\vartheta(m)\le2m\log2$.
Summing this inequality at powers of two proves $\vartheta(x)\ll x$.
The identity
\[
 \sum_{p\le x}\frac{\log p}{p}
 =\frac{\vartheta(x)}x+\int_2^x\frac{\vartheta(t)}{t^2}\,dt
\]
gives the other estimate in \eqref{eq:prime-elementary}.
Finally, \eqref{eq:totient} is the usual uniform form of the lower bound
for $\varphi(m)$; see \cite[Corollary~3.6]{Kouk}. The finitely many small
values of $m$, including $m=1$, are absorbed in an absolute constant.
\end{proof}

\begin{proposition}[Montgomery-Vaughan]\label{prop:MV}
Let $f\in\FA$ and $x\ge3$ be real. If $u\in\Z$, $v\in\N$, and $R_0\ge2$
satisfy
\[
 (u,v)=1,\qquad |\gamma-u/v|\le v^{-2},\qquad
 R_0\le v\le x/R_0,
\]
then
\begin{equation}\label{eq:MV-input}
 |S_f(x,\gamma)|\ll_A
 \frac{x}{\log(2x)}+
 \frac{x}{\sqrt{R_0}}(\log(2R_0))^{3/2}.
\end{equation}
\end{proposition}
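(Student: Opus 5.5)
This proposition is imported from \cite[Corollary~1]{MV}, so the plan is to show that the form stated here follows from the form proved there, and to check effectivity. Montgomery and Vaughan state their result for an integer length $N$, with $2\le R\le q\le N/R$, in the shape \eqref{eq:old-mv-intro}. Here $x$ is real, and the logarithms $\log(2x)$ and $\log(2R_0)$ are normalized so that they are bounded below. I would pass from $x$ to $N=\lfloor x\rfloor$, adjust the parameter $R_0$ to absorb the rounding, and dispose of a bounded range of $R_0$ trivially.

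\textbf{Case $2\le R_0\le 8$.} Here I would not use \cite{MV} at all. Lemma~\ref{lem:elementary} gives $|S_f(x,\gamma)|\le\sum_{n\le x}|f(n)|\le Ax$. In this range $R_0^{-1/2}(\log(2R_0))^{3/2}$ is bounded below by an absolute positive constant, so $Ax$ is already $\ll_A x R_0^{-1/2}(\log(2R_0))^{3/2}$.

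\textbf{Case $R_0>8$.} Put $N=\lfloor x\rfloor\ge3$, so $S_f(x,\gamma)=S_f(N,\gamma)$ and $N\ge x-1\ge\frac34x$ because $x\ge3$. Set $R_1=\frac34R_0$, which satisfies $R_1\ge6\ge2$. The hypothesis on $v$ gives
\[
 R_1\le R_0\le v\le \frac{x}{R_0}\le\frac{4N}{3R_0}=\frac{N}{R_1}.
\]
Thus $(u,v)$, $N$ and $R_1$ satisfy the hypotheses of \cite[Corollary~1]{MV}, which yields
\[
 |S_f(N,\gamma)|\ll_A \frac N{\log N}+\frac N{\sqrt{R_1}}(\log R_1)^{3/2}.
\]
To reach \eqref{eq:MV-input} I would compare each term with its counterpart there:
\begin{itemize}
\item $N\le x$, and $\log N\ge\log(2x)-\log(8/3)\gg\log(2x)$ for $x\ge3$, since $2x/N\le 8/3$;
\item $R_1^{-1/2}\le(4/3)^{1/2}R_0^{-1/2}$ and $\log R_1\le\log(2R_0)$.
\end{itemize}
Together these give \eqref{eq:MV-input} with a constant larger by an absolute factor.

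\textbf{Effectivity.} This is the only step needing more than bookkeeping. I would check that the argument of \cite{MV} uses only the large sieve, elementary prime estimates such as \eqref{eq:prime-elementary}, and the hypotheses \eqref{eq:class}. In particular it invokes no Siegel-type input, so its implied constant depends computably on $A$. The reductions above change that constant only by absolute factors, so the constant in \eqref{eq:MV-input} remains effective.

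I expect the main obstacle to be this verification against the source, together with confirming that \cite{MV} is stated uniformly in $f\in\FA$, rather than the reduction itself, which is routine once the rounding $x\mapsto\lfloor x\rfloor$ is compensated by shrinking $R_0$ by a constant factor.
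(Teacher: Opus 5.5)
Your proposal is correct and is essentially the paper's argument: the trivial bound $|S_f(x,\gamma)|\le Ax$ for bounded $R_0$, then passage to $\lfloor x\rfloor$ with $R_0$ shrunk by a constant factor before applying \cite[Corollary~1]{MV} (the paper uses $R_0/2$, you use $\frac34R_0$); effectivity rests on that proof using elementary estimates and effective upper-bound sieve inequalities, not Siegel--Walfisz. One small correction: $x-1\ge\frac34x$ requires $x\ge4$, not merely $x\ge3$, but in your second case $x\ge R_0v\ge R_0^2>64$, so the step is valid.
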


This is Corollary~1 of \cite{MV}; the proof below only records the
passage from integral to real $x$.

\begin{proof}
For integer $x$, this is \cite[Corollary~1, p.~70]{MV}, with harmless
changes from $\log x$ and $\log R_0$ to the displayed logarithms.
For real $x$, bounded $x$ or $2\le R_0<4$ follows from
$|S_f(x,\gamma)|\le Ax$.
Otherwise set $n=\lfloor x\rfloor\ge x/2$ and apply the integer result
with range parameter $R_0/2$. Indeed,
$R_0/2\le v\le x/R_0\le2n/R_0$, and $R_0/2\ge2$.
The resulting bound implies \eqref{eq:MV-input}, since $n\asymp x$.
The constant is effective: the proof of this corollary in
\cite[Sections~2-6]{MV} uses elementary estimates and effective
upper-bound sieve inequalities, not Siegel-Walfisz.
\end{proof}

\begin{lemma}\label{lem:dirichlet}
For every $Q\ge1$ and $\gamma\in\R$ there are $u\in\Z$ and $v\in\N$ with
\[
 (u,v)=1,\qquad 1\le v\le Q,\qquad
 |\gamma-u/v|\le\frac1{vQ}.
\]
\end{lemma}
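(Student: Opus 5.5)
The plan is the classical continued-fraction argument. I will treat $Q<2$ by hand and $Q\ge2$ through the convergents of $\gamma$, or equivalently through a pigeonhole on fractional parts; the pigeonhole route avoids any case split on rationality.

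\textbf{The case $Q<2$.} Take $v=1$ and let $u$ be the nearest integer to $\gamma$. Then $|\gamma-u|\le 1/2\le 1/Q$, and $(u,1)=1$ holds trivially.

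\textbf{The case $Q\ge2$.} Put $K=\lfloor Q\rfloor\ge1$ and consider the $K+1$ numbers $\{j\gamma\}$ for $0\le j\le K$, all lying in $[0,1)$. Partition $[0,1)$ into the $K+1$ half-open intervals $[i/(K+1),(i+1)/(K+1))$.
\begin{itemize}
\item If some $\{j\gamma\}$ with $j\ge1$ falls into the first interval or the last interval, then $j\gamma$ lies within $1/(K+1)$ of an integer $u'$.
\item Otherwise the $K$ points with $1\le j\le K$ lie in the remaining $K-1$ intervals. Two of them, say with indices $j<j'$, then share an interval. Setting $v'=j'-j\in[1,K]$, the number $v'\gamma$ lies within $1/(K+1)$ of an integer $u'$.
\end{itemize}
In either case we obtain $1\le v'\le K\le Q$ with $|v'\gamma-u'|<1/(K+1)<1/Q$, using $K+1>Q$.

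\textbf{Reduction to lowest terms.} Let $d=(u',v')$ and set $u=u'/d$, $v=v'/d$. Then $(u,v)=1$ and $1\le v\le v'\le Q$. Moreover
\[
 |\gamma-u/v|=|\gamma-u'/v'|<\frac1{v'Q}\le\frac1{vQ},
\]
which is the required inequality.

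The only mild obstacle is the bookkeeping that makes the pigeonhole strict, namely using $K+1$ intervals together with $K+1>Q$. Reducing the fraction only improves the bound, since $v\le v'$. This is the standard Dirichlet approximation theorem, and the constants are trivially effective.
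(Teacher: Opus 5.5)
Your proof is correct and is essentially the paper's argument: a pigeonhole on the fractional parts $\{j\gamma\}$, $0\le j\le\lfloor Q\rfloor$, followed by reduction to lowest terms. You use $K+1$ boxes with the two end boxes handled via the point $0$, where the paper uses circular gaps on $\R/\Z$, and your separate case $Q<2$ is not actually needed. One cosmetic slip: if $\{j\gamma\}=K/(K+1)$ lands at the left end of the last box, the distance to the nearest integer is exactly $1/(K+1)$, not strictly less, but since $K+1>Q$ you still get $|v'\gamma-u'|<1/Q$, so nothing changes.
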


\begin{proof}
Write $\{t\}=t-\lfloor t\rfloor$ for the fractional part of a real
number $t$. Put $m=\lfloor Q\rfloor$ and consider the $m+1$ points
$0,\{\gamma\},\ldots,\{m\gamma\}$ on $\R/\Z$.
If two coincide, a difference of their indices yields an exact rational
representation with denominator at most $m$.
Otherwise one circular gap has length at most $1/(m+1)$.
The difference of the corresponding indices gives $1\le v_0\le m$ and
an integer $u_0$ such that
$|v_0\gamma-u_0|\le1/(m+1)<1/Q$.
Reduce $u_0/v_0$ to $u/v$. Since $v\le v_0$, its approximation error is
at most $1/(v_0Q)\le1/(vQ)$.
\end{proof}

\begin{proposition}[Brun-Titchmarsh on an arbitrary interval]\label{prop:BT}
There is an effective absolute constant $C_\mathrm{BT}$ such that, for
real $x\ge h\ge r\ge1$, integers $r,b$ with $(b,r)=1$,
\begin{equation}\label{eq:BT}
 \#\{p:x-h<p\le x,\ p\equiv b\pmod r\}
 \le C_\mathrm{BT}\frac{h}{\varphi(r)\log(2h/r)}.
\end{equation}
The constant is independent of the location $x$ of the interval.
\end{proposition}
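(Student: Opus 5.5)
This is the Brun--Titchmarsh inequality in the Montgomery--Vaughan form. The plan is to take the large sieve version as given and check that the hypotheses match and that the constant is effective. The primes counted lie in the interval $(x-h,x]$ and in the class $b\pmod r$, so each has the form $n=b'+kr$ with $k$ running over an interval of consecutive integers of length $K\le h/r+1\le 2h/r$.

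Primes $p\le \sqrt{h/r}$ contribute at most $\sqrt{h/r}+1$. Every other prime $p$ in the interval has no prime factor $\ell\le z:=\sqrt{h/r}$ with $\ell\nmid r$. We therefore sift the set $\{b'+kr\}$ by all primes $\ell\le z$ with $\ell\nmid r$, removing one residue class of $k$ modulo each $\ell$. This is legitimate because $r$ is invertible modulo $\ell$.

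The arithmetic large sieve (Montgomery--Vaughan) bounds the number of surviving $k$ by
\[
 \frac{K+z^2}{J(z)},\qquad J(z)=\sum_{\substack{d\le z\\(d,r)=1}}\mu^2(d)\prod_{\ell\mid d}\frac{1}{\ell-1}.
\]
The numerator satisfies $K+z^2\le 3h/r$. For the denominator we use the standard lower bound
\[
 J(z)\ge\frac{\varphi(r)}{r}\sum_{d\le z}\frac{\mu^2(d)}{\varphi(d)}\ge\frac{\varphi(r)}{r}\log z.
\]
This follows by splitting $d=d_1d_2$ with $d_1\mid r^\infty$ and comparing with $\sum_{m\le z}1/m$. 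Since $\log z=\tfrac12\log(h/r)$, we get $\ll h/(\varphi(r)\log(h/r))$ whenever $h/r\ge 4$, say. The term $\sqrt{h/r}+1$ coming from small primes is also $\ll h/(r\log(2h/r))\le h/(\varphi(r)\log(2h/r))$.

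The main point requiring care is the range $1\le h/r<4$, where $\log(h/r)$ may be near zero. There we use the trivial bound: the count is at most $K\le h/r+1\le 5$. Meanwhile
\[
 \frac{h}{\varphi(r)\log(2h/r)}\ge\frac{h}{r\log 8}\ge\frac1{\log8},
\]
so the right-hand side is bounded below by an absolute constant, and the trivial bound is absorbed. In the main range one likewise replaces $\log(h/r)$ by $\log(2h/r)$ at the cost of an absolute factor.

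Nothing depends on $x$ except through the length $K$, which gives the uniformity in location. All ingredients are explicit inequalities, with no zero-free regions or Siegel-type input, so $C_{\mathrm{BT}}$ is effective. Alternatively, one can simply cite Montgomery--Vaughan's theorem $\pi(x;r,b)-\pi(x-h;r,b)\le 2h/(\varphi(r)\log(h/r))$ for $h>r$ and handle $h/r$ near $1$ as above.
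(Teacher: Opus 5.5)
Your proof is correct, and it takes a genuinely different route from the paper. The paper runs no sieve of its own. It quotes Koukoulopoulos's Theorem~21.4, a Selberg upper-bound sieve result whose numerator is $2+O(\log\log(3t)/\log t)$ with $t=h/r$, and notes that this numerator is effectively bounded for $t\ge2$. It then handles $1\le t<2$ by counting all integers in the residue class, just as you do for $1\le t<4$.

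You instead derive the inequality from the arithmetic large sieve, sifting the progression variable $k$ by the primes $\ell\le z=\sqrt{h/r}$ with $\ell\nmid r$. The individual steps check out:
\begin{itemize}
\item $K+z^2\le3h/r$;
\item the factorisation gives $\sum_{d\le z}\mu^2(d)/\varphi(d)\le (r/\varphi(r))J(z)$;
\item there are at most $z$ primes $p\le z$, counted separately;
\item for $t\ge4$, replacing $\log(h/r)$ by $\log(2h/r)$ costs only a factor $3/2$.
\end{itemize}

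Your route is self-contained apart from the large sieve inequality, and it yields a fully explicit numerical $C_{\mathrm{BT}}$ (of order $10$). That supports the paper's effectivity claim more transparently than checking the $O$-term of a cited theorem. The paper's route is shorter and keeps the sharp leading constant $2$, although only the order of magnitude is ever used.

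Your closing alternative cites the Montgomery--Vaughan bound $2h/(\varphi(r)\log(h/r))$ for $h>r$ and treats $h/r$ near $1$ trivially. That is essentially the paper's proof with a different source.
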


This is the Brun-Titchmarsh inequality in the form given by
Montgomery and Vaughan \cite{MV2}; the proof records the source used and
the endpoint convention.

\begin{proof}
This is the constant-factor consequence of
\cite[Theorem~21.4, p.~219]{Kouk}, whose proof is by Selberg's upper-bound
sieve. Writing $t=h/r$, its displayed numerator is
$2+O(\log\log(3t)/\log t)$, which is bounded effectively for $t\ge2$. For $1\le h/r<2$, counting
all integers in the residue class gives at most $h/r+1\le2h/r$,
which also implies \eqref{eq:BT}, since $\varphi(r)\le r$ and
$\log(2h/r)\le\log4$. This includes $r=1$.
\end{proof}

\begin{proposition}[Finite-prefix Carleson-Hunt inequality]\label{prop:CH}
There is an effective absolute constant $C_\mathrm{CH}$ such that for every integer
$U\ge1$ and all $c_1,\ldots,c_U\in\C$,
\begin{equation}\label{eq:CH}
 \int_0^1\max_{0\le K\le U}
 \bigg|\sum_{n\le K}c_n\e(n\theta)\bigg|^2\,d\theta
 \le C_\mathrm{CH}\sum_{n\le U}|c_n|^2.
\end{equation}
The maximum is over integer $K$, and the sum for $K=0$ is zero.
\end{proposition}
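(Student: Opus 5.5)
This inequality should follow from the Carleson--Hunt theorem on $L^2(\R/\Z)$, which I take as the black box. It states that the maximal partial-sum operator
\[
 \mathcal C F(\theta)=\sup_{K\ge0}\bigg|\sum_{|n|\le K}\widehat F(n)\e(n\theta)\bigg|
\]
satisfies $\|\mathcal CF\|_2\le C_0\|F\|_2$ for an absolute constant $C_0$. The constant can be traced through Hunt's or Fefferman's proof, or through Lacey--Thiele's; none of these uses a compactness or contradiction argument, so $C_0$ is effective.

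To pass from symmetric partial sums to one-sided prefixes, I would use a modulation trick. Given $c_1,\dots,c_U$, set $M=U$ and define the trigonometric polynomial
\[
 F(\theta)=\sum_{n=1}^{U}c_n\e\big((n-M)\theta\big).
\]
Its Fourier support lies in $[1-M,\,U-M]=[1-U,0]$. For $0\le K\le U$, the one-sided prefix $\sum_{n\le K}c_n\e(n\theta)$ equals $\e(M\theta)$ times the sum of $\widehat F(m)\e(m\theta)$ over $1-U\le m\le K-U$. Since $\widehat F(m)=0$ for $m<1-U$, this is the sum over $m\le K-U$.

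To convert this into a symmetric partial sum, I would modulate once more, using a second frequency shift that centres the support:
\[
 G(\theta)=\e(J\theta)F(\theta),\qquad J=U.
\]
Then $G$ has support in $[1,U]$, so we are back to the original prefixes. A cleaner route is to write the prefix as a difference. For $m\le K-U$ with support in $[1-U,0]$, the range $\{m:\ m\le K-U\}\cap[1-U,0]$ is the same as $\{m:\ |m|\ge U-K\}$ restricted to the support. This equals the full sum minus the symmetric partial sum $\sum_{|m|\le U-K-1}\widehat F(m)\e(m\theta)$, where the latter is read as $0$ when $K=U$.

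Hence, pointwise,
\[
 \max_{0\le K\le U}\bigg|\sum_{n\le K}c_n\e(n\theta)\bigg|\le |F(\theta)|+\mathcal CF(\theta).
\]
Squaring, integrating, and applying Parseval and Carleson--Hunt gives
\[
 \int_0^1\max_K|\cdot|^2\,d\theta\le 2(1+C_0^2)\sum_{n\le U}|c_n|^2.
\]
This proves the inequality with $C_{\mathrm{CH}}=2(1+C_0^2)$. The maximum over integer $K$ of finitely many continuous functions is measurable, and the case $K=0$ is trivial.

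The only genuine obstacle is the effectivity of $C_0$. It is not supplied by the usual statement of the theorem and must be justified by citing a proof with explicit or trackable constants. Lacey--Thiele's time-frequency proof produces constants through finitely many explicit tiling, orthogonality and interpolation steps, and Marcinkiewicz interpolation with an explicit weak-type $(2,2)$ bound then gives the strong bound. Everything else in the argument is the exact algebra of Fourier supports described above.
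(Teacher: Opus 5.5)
Your argument is correct and is essentially the paper's. The paper applies the tail-maximal form of Carleson--Hunt directly to the analytic polynomial $\sum_{n\le U}c_nz^n$ and writes each prefix as the whole polynomial minus a tail. Your multiplication by $\e(-U\theta)$ turns the symmetric partial sums $S_{U-K-1}F$ into exactly those tails, and both arguments finish with $|u-v|^2\le2|u|^2+2|v|^2$ and Parseval; the abandoned detour through $G=\e(U\theta)F$ merely undoes the modulation and should be deleted.

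One correction to your effectivity remark: Marcinkiewicz interpolation cannot upgrade a weak-type $(2,2)$ bound alone to a strong $L^2$ bound. It needs weak or restricted weak-type estimates at exponents on both sides of $2$, which the time--frequency and Hunt-type proofs do supply. In any case, here as in the paper, effectivity of the constant is ultimately a matter of citation.
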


This is the $L^2$ case of the Carleson-Hunt theorem \cite{C,H} for
trigonometric polynomials; the proof records the passage from the
tail-maximal form in the source to the prefix-maximal form used here.

\begin{proof}
The classical Carleson-Hunt theorem is stated in tail-maximal form in
\cite[Theorem~1.2, p.~87]{HS}. Let $C_\mathrm{tail}$ denote the absolute
constant in that tail-maximal inequality. Apply it to the analytic
polynomial $F(z)=\sum_{n=1}^U c_nz^n$, with all other Taylor
coefficients zero. Its tail beginning at $K+1$ is
$F(z)-\sum_{n\le K}c_nz^n$.
The inequality $|u-v|^2\le2|u|^2+2|v|^2$, followed by the tail-maximal
bound and Parseval's identity, gives \eqref{eq:CH} with
$C_\mathrm{CH}=2(1+C_\mathrm{tail})$.
Normalized measure on the unit circle becomes $d\theta$ under
$z=\exp(2\pi i\theta)$. The quantitative proofs of the
Carleson-Hunt bound permit an effective absolute constant; see also
\cite{Lacey} for a proof and the equivalent
Fourier-series formulation. No numerical value of $C_\mathrm{CH}$ is
required here.
\end{proof}

\section{The logarithmic identity and small primes}\label{sec:small}

\begin{lemma}\label{lem:log}
Uniformly for $f\in\FA$, real $N\ge3$, and $\alpha\in\R$,
\begin{equation}\label{eq:log-identity}
 (\log N)S_f(N,\alpha)
 =\sum_{p\le N}f(p)S_f(N/p,p\alpha)\log p+O_A(N).
\end{equation}
\end{lemma}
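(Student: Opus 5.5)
We follow the standard Montgomery--Vaughan route: insert $\log n=\log N-\log(N/n)$ into $S_f(N,\alpha)$ and then expand $\log n=\sum_{p^k\| n}k\log p$. This gives
\[
 (\log N)S_f(N,\alpha)=\sum_{n\le N}f(n)\log n\,\e(n\alpha)+\sum_{n\le N}f(n)\log(N/n)\e(n\alpha).
\]
The second sum is bounded by $\sum_{n\le N}|f(n)|\log(N/n)$. Partial summation with $\sum_{n\le x}|f(n)|\le Ax$ from \eqref{eq:l1-l2} gives $\le A\int_1^N t\cdot t^{-1}\,dt\le AN$, so it is $O_A(N)$.

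For the first sum we write $n=p^km$ with $p\nmid m$. Then
\[
 \sum_{n\le N}f(n)\log n\,\e(n\alpha)=\sum_{p^k m\le N,\ p\nmid m}k\log p\,f(p^k)f(m)\e(p^km\alpha).
\]
The terms with $k=1$ differ from $\sum_{p\le N}f(p)\log p\,S_f(N/p,p\alpha)$ only through the terms with $p\mid m$ that must be added back. Those added terms are $\sum_{p}\log p\,f(p)\sum_{m\le N/p,\,p\mid m}f(m)\e(pm\alpha)$. Their total absolute value is at most
\[
 A\sum_p\log p\sum_{j\le N/p^2}|f(pj)|.
\]
Since $f(pj)$ is not $f(p)f(j)$ in general, we instead bound $\sum_{p^2\mid n,\ n\le N}|f(n)|\log p$ (up to a constant). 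The terms with $k\ge2$ have the same shape. So everything reduces to the single estimate
\[
 E:=\sum_{p}\sum_{k\ge2}k\log p\sum_{\substack{n\le N\\ p^k\| n}}|f(n)|\ll_A N,
\]
together with the analogous bound with an extra factor $|f(p)|\le A$.

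The main obstacle is bounding $E$ when $f$ is unbounded at prime powers. The class only gives mean-square control, so $|f(p^k)|$ may be large. We handle this with Cauchy--Schwarz applied to each pair $(p,k)$:
\[
 \sum_{\substack{n\le N\\ p^k\mid n}}|f(n)|\le\Big(\frac{N}{p^k}\Big)^{1/2}\Big(\sum_{n\le N}|f(n)|^2\Big)^{1/2}\le \frac{AN}{p^{k/2}}.
\]
Then
\[
 E\le AN\sum_p\sum_{k\ge2}\frac{k\log p}{p^{k/2}}.
\]
This series diverges at $k=2$, since it contains $\sum_p(\log p)/p$, which is up to $\log N$ by \eqref{eq:prime-elementary}. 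So this crude bound loses exactly one logarithm, and we need something finer for $k=2,3$.

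First we would exploit the multiplicative structure: $n=p^km$ with $p\nmid m$ gives $|f(n)|=|f(p^k)||f(m)|$. Hence
\[
 E=\sum_{p,k\ge2}k\log p\,|f(p^k)|\sum_{\substack{m\le N/p^k\\ p\nmid m}}|f(m)|\le AN\sum_{p,k\ge2}\frac{k\log p\,|f(p^k)|}{p^k}.
\]
Now $|f(p^k)|\le |f(p^k)|^2+1$, and $\sum_{p,k\ge2}k\log p/p^k\ll1$. It remains to bound $\sum_{p,k\ge2}k\log p\,|f(p^k)|^2/p^k$. By Cauchy--Schwarz in a different form, $|f(p^k)|/p^k\le |f(p^k)|^2/p^{4k/3}+p^{-2k/3}$. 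The first part is controlled by $\sum_n|f(n)|^2n^{-4/3}\le4A^2$ from \eqref{eq:l1-l2}, with $k\log p\ll_\epsilon p^{\epsilon k}$ absorbed by slightly adjusting the exponent. Concretely, we write
\[
 \frac{k\log p\,|f(p^k)|}{p^k}\le\frac{|f(p^k)|^2}{p^{4k/3}}+\frac{k^2\log^2p}{p^{2k/3}},
\]
using $ab\le a^2+b^2/4$. The last series over $k\ge2$ converges, since $p^{-4/3}$ is summable. This gives $E\ll_A N$, and the same argument with the extra factor $A$ handles the added-back $p\mid m$ terms. Collecting the three pieces yields \eqref{eq:log-identity}.
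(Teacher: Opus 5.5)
Your final argument is correct and is essentially the paper's. The paper packages your $k\ge2$ terms and your added-back $p\mid m$ terms into the single coefficient difference $\bigl(jf(p^j)-f(p)f(p^{j-1})\bigr)f(m)\log p$. It then factors out $p^j$ via $\sum_{m\le x}|f(m)|\le Ax$, and bounds $\sum_p\sum_{j\ge2}j|f(p^j)|\log p/p^j$ by Cauchy--Schwarz against $\sum_n|f(n)|^2n^{-4/3}\le4A^2$, which is what your pointwise inequality $ab\le a^2+b^2/4$ does.

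Two intermediate remarks are wrong and should be deleted.

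\emph{The reduction via $|f(p^k)|\le|f(p^k)|^2+1$.} This fails because $\sum_p\sum_{k\ge2}k\log p\,|f(p^k)|^2/p^k$ can diverge in the class. Take $f(p)=0$, $f(p^2)=\sqrt p$, and $f(p^k)=0$ for $k\ge3$. Then $f$ lies in $\mathcal F_1$, since $\sum_{n\le x}|f(n)|^2\le\sum_{m\le\sqrt x}m\le x$, yet the series equals $2\sum_p\log p/p$.

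\emph{The comparison of the added-back terms with $\sum_{p^2\mid n}|f(n)|\log p$.} This is not valid, because $|f(p)f(m)|$ and $|f(pm)|$ are unrelated when $p\mid m$. Instead write $m=p^im'$ with $p\nmid m'$, which gives the bound $A^2N\sum_p\sum_{i\ge1}|f(p^i)|\log p/p^{i+1}$. Use $|f(p)|\le A$ for $i=1$ and your AM--GM bound for $i\ge2$. This is precisely the paper's $A|f(p^{j-1})|$ term.
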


\begin{proof}
By \eqref{eq:l1-l2} and partial summation,
\[
 \sum_{n\le N}|f(n)|\log(N/n)
 =\int_1^N\frac1t\sum_{n\le t}|f(n)|\,dt
 \le A(N-1).
\]
It therefore suffices to compare the sum weighted by $\log n$ with the
prime sum in \eqref{eq:log-identity}.
For $p^j\Vert n$ (meaning $p^j\mid n$ and $p^{j+1}\nmid n$),
write $n=p^jm$ with $p\nmid m$.
Using $\log n=\sum_{p^j\Vert n}j\log p$, multiplicativity shows that the
difference of the two coefficients belonging to this prime is
\[
 \bigl(jf(p^j)-f(p)f(p^{j-1})\bigr)f(m)\log p.
\]
For $j=1$ this is zero. The remaining error is bounded by
\begin{equation}\label{eq:prime-power-error}
 AN\sum_p\sum_{j\ge2}\frac{\log p}{p^j}
       \bigl(j|f(p^j)|+A|f(p^{j-1})|\bigr).
\end{equation}
The series with the first term satisfies
\[
 \sum_p\sum_{j\ge2}\frac{j|f(p^j)|\log p}{p^j}
 \le
 \bigg(\sum_{n\ge1}\frac{|f(n)|^2}{n^{4/3}}\bigg)^{1/2}
 \bigg(\sum_p\sum_{j\ge2}
       \frac{j^2(\log p)^2}{p^{2j/3}}\bigg)^{1/2}\ll_A1.
\]
For the convergence of the second squared factor, the sum over $j\ge2$
is $\ll p^{-4/3}(\log p)^2$; its sum over primes is bounded by the
convergent sum over integers. Distinct pairs $(p,j)$ give distinct prime
powers, which justifies the first factor.
The other series in \eqref{eq:prime-power-error} is at most
\[
 A^2\sum_p\frac{\log p}{p^2}
 +\frac A2\sum_p\sum_{j\ge2}
             \frac{|f(p^j)|\log p}{p^j}\ll_A1.
\]
Here the first term is the contribution from the exponent $2$ in
\eqref{eq:prime-power-error}; for the other exponents put $j$ equal to
that exponent minus one and use $p^{-1}\le1/2$.
This proves \eqref{eq:log-identity}. The reduction is the
phase-independent form of the preliminary reduction in \cite[Section~2]{MV};
the details above show explicitly that no bound on $f(p^j)$ for $j\ge2$
has been inserted.
\end{proof}

\begin{lemma}\label{lem:coarse}
Let $f\in\FA$, $\gamma=b/s+\delta$, $(b,s)=1$, and set
\[
 H=\max\{1,x|\delta|\},\qquad D_0=sH.
\]
If $x\ge16D_0^2$, then
\begin{equation}\label{eq:coarse}
 |S_f(x,\gamma)|\ll_A
 \frac{x}{\log(2x)}+
 \frac{x(\log(2D_0))^{3/2}}{\sqrt{D_0}}.
\end{equation}
\end{lemma}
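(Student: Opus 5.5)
The plan is to deduce \eqref{eq:coarse} directly from Proposition~\ref{prop:MV}. I will produce a reduced fraction $u/v$ with $|\gamma-u/v|\le v^{-2}$ and $R_0\le v\le x/R_0$, where $R_0\asymp D_0$. The bound \eqref{eq:MV-input} then gives \eqref{eq:coarse}, because $\log(2R_0)\le\log(2D_0)$ and $\sqrt{R_0}\gg\sqrt{D_0}$. If $D_0<4$, the trivial bound $|S_f(x,\gamma)|\le Ax$ from \eqref{eq:l1-l2} already suffices: $(\log(2D_0))^{3/2}/\sqrt{D_0}$ is then bounded below by an absolute positive constant. So I assume $D_0\ge4$. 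The hypothesis $x\ge16D_0^2$ is what makes the two competing approximations incompatible. I split according to whether $H=1$ or $H>1$.

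\emph{Case $H=1$}, that is $x|\delta|\le1$ and $D_0=s$. I take $u/v=b/s$ itself and $R_0=s\ge4$. From $s^2=D_0^2\le x/16$ I get $|\delta|\le1/x\le s^{-2}$. I also get $s\le x/s$. Hence Proposition~\ref{prop:MV} applies with $R_0=D_0$.

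\emph{Case $H>1$}, that is $|\delta|=H/x>1/x$. Here $b/s$ may have too small a denominator, so I apply Lemma~\ref{lem:dirichlet} with $Q=2x/D_0$. This gives $(u,v)=1$, $1\le v\le Q$ and $|\gamma-u/v|\le 1/(vQ)\le v^{-2}$.

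First I rule out $u/v=b/s$. If it held, then $|\delta|\le 1/(sQ)=D_0/(2sx)=H/(2x)<|\delta|$, which is impossible.

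So $u/v\ne b/s$, and therefore
\[
 \frac1{sv}\le\Big|\frac bs-\frac uv\Big|\le|\delta|+\frac1{vQ}.
\]
Multiplying by $sv$ gives $1\le sv|\delta|+s/Q$. Here $s/Q=sD_0/(2x)\le D_0^2/(2x)\le 1/32$. Hence
\[
 v\ge\frac{31}{32}\cdot\frac1{s|\delta|}=\frac{31x}{32D_0}.
\]
Now set $R_0=D_0/2\ge2$. The upper bound $v\le Q=2x/D_0=x/R_0$ is immediate. For the lower bound, $x\ge16D_0^2$ gives $31x/(32D_0)\ge D_0/2=R_0$. Proposition~\ref{prop:MV} then yields \eqref{eq:coarse}.

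The only delicate point is this second case: choosing $Q$ large enough to exclude the approximant $b/s$, yet small enough that $v\le x/R_0$. The choice $Q=2x/D_0$ achieves both, and the numerical constants above check out. All constants come from Proposition~\ref{prop:MV}, so they are effective.
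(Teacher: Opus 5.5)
Your proof is correct and follows the same route as the paper: a trivial bound for bounded $D_0$, Proposition~\ref{prop:MV} with $R_0=s$ when $H=1$, and, when $H>1$, a Dirichlet approximant with $Q\approx 2x/D_0$ that cannot equal $b/s$ and is then pinned to $v\asymp x/D_0$ by separation of distinct reduced fractions. The differences are cosmetic: you use the unfloored $Q=2x/D_0$, the threshold $D_0\ge4$, and $R_0=D_0/2$, where the paper uses $\lfloor 2x/D_0\rfloor$, $D_0\ge16$, and $R_0=D_0/4$; your numerical checks for these choices are valid.
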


\begin{proof}
For $1\le D_0<16$, the second term in \eqref{eq:coarse} is bounded
below by a positive absolute multiple of $x$, so \eqref{eq:l1-l2}
suffices. Assume $D_0\ge16$.
If $H=1$, then $D_0=s$, $|\delta|\le x^{-1}\le s^{-2}$, and
$s\le x/s$. Apply Proposition~\ref{prop:MV} with $R_0=s$.

Suppose $H>1$. Put $Q=\lfloor2x/D_0\rfloor$ and use
Lemma~\ref{lem:dirichlet} to choose a reduced fraction $c/t$ with
\[
 t\le Q,\qquad |\gamma-c/t|\le1/(tQ).
\]
Since $x/D_0\ge16D_0$, we have $Q\ge x/D_0$ and
\[
 \frac{D_0Q}{x}\ge2-\frac{D_0}{x}>1.
\]
Thus $b/s\ne c/t$: equality of these reduced fractions would give
$sQ|\delta|\le1$, whereas $sQ|\delta|=D_0Q/x>1$.
Separation of distinct reduced fractions yields
\[
 1\le st|\delta|+s/Q=D_0t/x+s/Q.
\]
The bound $s/Q\le sD_0/x\le D_0^2/x\le1/16$ therefore implies
\[
 \frac{x}{2D_0}\le t\le\frac{2x}{D_0}.
\]
In particular, for $R_0=D_0/4$, we have $R_0\ge2$,
$R_0\le t\le x/R_0$, and $|\gamma-c/t|\le t^{-2}$.
Applying Proposition~\ref{prop:MV} with these parameters gives \eqref{eq:coarse}.
\end{proof}

\begin{lemma}\label{lem:small}
Let $f\in\FA$, $N\ge3$, $L=\log N$, and
$\alpha=a/r+\beta$, where $a\in\Z$, $r\in\N$, and $(a,r)=1$.
Put $B=\max\{1,N|\beta|\}$ and $D=rB$.
If $2\le Y\le\sqrt N$ and $N/Y\ge16D^2$, then
\begin{equation}\label{eq:small-primes-general}
 \bigg|\sum_{p\le Y}f(p)S_f(N/p,p\alpha)\log p\bigg|
 \ll_A N\bigg(\frac{\log(2Y)}L
 +\frac{(\log(2D))^{3/2}\log(2Y)}{\sqrt D}
 +\frac{(\log(2D))^{5/2}}{\sqrt D}\bigg).
\end{equation}
In particular, suppose $D\le L^3$ and $Y$ is the least power of two
not smaller than $16D^2$. For all sufficiently large $N$, with an
effective absolute threshold,
\begin{equation}\label{eq:small-primes}
 \sum_{p\le Y}f(p)S_f(N/p,p\alpha)\log p=O_A(N).
\end{equation}
\end{lemma}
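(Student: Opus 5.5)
The plan is to bound each summand $f(p)S_f(N/p,p\alpha)\log p$ separately by Lemma~\ref{lem:coarse} and then sum over $p\le Y$ using $|f(p)|\le A$ and \eqref{eq:prime-elementary}.

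Fix a prime $p\le Y$, put $x=N/p$ and $\gamma=p\alpha=pa/r+p\beta$. Write $g=(p,r)$, which is $1$ or $p$. Then $pa/r=b/s$ in lowest terms, with $s=r/g$ and $b=(p/g)a$, and $\delta=p\beta$. The parameters of Lemma~\ref{lem:coarse} are
\[
H=\max\{1,x|\delta|\}=\max\{1,N|\beta|\}=B,\qquad D_0=sB=D/g .
\]
The displacement parameter is therefore preserved exactly, and $D_0\in\{D,D/p\}$. For the hypothesis $x\ge16D_0^2$ we have $x=N/p\ge N/Y\ge16D^2\ge16D_0^2$. Lemma~\ref{lem:coarse} then gives
\[
|S_f(N/p,p\alpha)|\ll_A \frac{N}{p\log(2N/p)}+\frac{N}{p}\frac{(\log 2D_0)^{3/2}}{\sqrt{D_0}}.
\]
Since $p\le Y\le\sqrt N$, we have $\log(2N/p)\gg L$.

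Now sum over $p\le Y$ with weight $A\log p$. The first term gives $(N/L)\sum_{p\le Y}\log p/p\ll N\log(2Y)/L$ by \eqref{eq:prime-elementary}. For the second term, split according to $p\nmid r$ or $p\mid r$.
\begin{itemize}
\item If $p\nmid r$, then $D_0=D$, and the sum is $\ll N(\log 2D)^{3/2}D^{-1/2}\log(2Y)$.
\item If $p\mid r$, then $D_0=D/p$ and the term equals $N(\log 2D_0)^{3/2}\log p/(p\sqrt{D/p})\le N(\log 2D)^{3/2}\log p/\sqrt{pD}$. Here $D_0\ge1$, which holds since $s\ge1$. Summing over the distinct prime divisors $p$ of $r$, we use $\sum_{p\mid r}\log p/\sqrt p\ll\sum_{p\le \log(2r)+O(1)}\log p/\sqrt p$. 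The worst case is the first primes, since the summand decreases for $p\ge e^2$ and $\omega(r)$ primes satisfy $\prod p\le r$. This gives $\ll\sqrt{\log 2r}\ll\log(2D)$, which yields the third term $N(\log 2D)^{5/2}/\sqrt D$.
\end{itemize}
The main care point is this divisor case: the loss of $\sqrt p$ in $D_0$ must be compensated by the rarity of $p\mid r$. The crude bound $\sum_{p\mid r}\log p/\sqrt p\ll\log(2r)$ already suffices.

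For \eqref{eq:small-primes}, take $D\le L^3$ and $Y$ the least power of two with $Y\ge16D^2$. Then $Y\le32D^2\le32L^6$, so $\log(2Y)\ll\log L$ and the first term is $O(N\log L/L)=O(N)$. We need $Y\le\sqrt N$ and $N/Y\ge16D^2$; both hold for $N$ beyond an effective threshold, since $Y,16D^2\ll L^6$. The remaining two terms are $\ll N(\log 2D)^{5/2}D^{-1/2}$, using $\log(2Y)\ll\log(2D)$, and this is $O(N)$ uniformly in $D\ge1$.
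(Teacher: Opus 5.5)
Your proposal is correct and follows the paper's proof essentially step for step: you apply Lemma~\ref{lem:coarse} to each $S_f(N/p,p\alpha)$ with $H=B$ and $D_0=D/(p,r)$, split $p\nmid r$ from $p\mid r$, and use the dyadic choice $16D^2\le Y<32D^2$ for \eqref{eq:small-primes}. The paper uses only the crude bound $\sum_{p\mid r}\log p/\sqrt p\le\log r\le\log D$, so you can drop your rearrangement detour, whose range $p\le\log(2r)+O(1)$ is loosely stated and would in any case need a Chebyshev lower bound.
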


\begin{proof}
For $p\le Y$, put $x=N/p$ and $d_p=(p,r)$, so $d_p\in\{1,p\}$.
The reduced denominator of the rational part of $p\alpha$ is $r/d_p$,
while
\[
 \max\{1,x|p\beta|\}=B,\qquad D_0=(r/d_p)B=D/d_p.
\]
Thus $x\ge16D_0^2$, and Lemma~\ref{lem:coarse} applies. Also,
$\log(2x)\ge L/2$ because $p\le\sqrt N$.
Using $|f(p)|\le A$, the first term of \eqref{eq:coarse} contributes
\[
 \ll_A\frac NL\sum_{p\le Y}\frac{\log p}{p}
 \ll_A\frac{N\log(2Y)}L.
\]
For $p\nmid r$, its second term contributes
\[
 \ll_A\frac{N(\log(2D))^{3/2}}{\sqrt D}
                \sum_{p\le Y}\frac{\log p}{p}
 \ll_A\frac{N(\log(2D))^{3/2}\log(2Y)}{\sqrt D}.
\]
For $p\mid r$, use $D_0=D/p\ge1$ to bound that contribution by
\[
 \ll_A\frac{N(\log(2D))^{3/2}}{\sqrt D}
                \sum_{p\mid r}\frac{\log p}{\sqrt p}
 \ll_A\frac{N(\log(2D))^{5/2}}{\sqrt D}.
\]
Indeed the prime sum is empty for $r=1$, and otherwise it is at most
$\log r\le\log D$. This proves \eqref{eq:small-primes-general}.

For the stated dyadic choice,
\begin{equation}\label{eq:polynomial-cutoff}
 16D^2\le Y<32D^2,\qquad \log(2Y)\ll\log(2D).
\end{equation}
The conditions $Y\le\sqrt N$ and $N/Y\ge16D^2$ follow, uniformly for
$D\le L^3$, from $32L^6<\sqrt N$ and $512L^{12}<N$.
These inequalities hold beyond a computable absolute threshold.
The bound \eqref{eq:small-primes-general} is then
\[
 \ll_A N\Big(\frac{\log(2D)}L
 +\frac{(\log(2D))^{5/2}}{\sqrt D}\Big)\ll_A N.
\]
Here $\log(2D)\ll\log(2L)$, and
$(\log(2D))^{5/2}/\sqrt D$ is bounded for $D\ge1$.
All constants and thresholds used in this proof are effective.
\end{proof}

\section{Finite maximal Fourier estimates}\label{sec:fourier}

For $r\in\N$ write
\[
 \mathcal A(r)=\{0,1,\ldots,r-1\},\qquad
 \mathcal R(r)=\{b\in\mathcal A(r):(b,r)=1\}.
\]
In particular, $\mathcal R(1)=\{0\}$.
For an integer $U\ge1$ and a vector $c=(c_1,\ldots,c_U)\in\C^U$, set
\begin{equation}\label{eq:maximal-definition}
 E(c)=\sum_{n\le U}|c_n|^2,\qquad
 M_c(\theta)=\max_{0\le K\le U}
                  \bigg|\sum_{n\le K}c_n\e(n\theta)\bigg|.
\end{equation}
The maximum is over integer $K$, including the zero prefix.
The function $M_c$ is continuous and $1$-periodic.
Proposition~\ref{prop:CH} says that
$\int_0^1 M_c(\theta)^2\,d\theta\le C_\mathrm{CH}E(c)$.

\begin{lemma}\label{lem:short-residue}
For integers $1\le U\le r$ and any $c_1,\ldots,c_U\in\C$,
\begin{equation}\label{eq:short-residue}
 \frac1{\varphi(r)}\sum_{b\in\mathcal R(r)}
       \bigg|\sum_{n\le U}c_n\e(bn/r)\bigg|^2
 \ll \log\log(3U)\sum_{n\le U}|c_n|^2.
\end{equation}
The implied constant is absolute.
\end{lemma}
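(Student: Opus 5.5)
The plan is to expand the square and evaluate the inner sum over $b$ exactly with Ramanujan sums, and then bound the resulting Hermitian form by a Schur-type test. Write $S(\theta)=\sum_{n\le U}c_n\e(n\theta)$. Then
\[
 \sum_{b\in\mathcal R(r)}|S(b/r)|^2=\sum_{m,n\le U}c_m\overline{c_n}\,c_r(m-n),
 \qquad c_r(h)=\sum_{b\in\mathcal R(r)}\e(bh/r).
\]
For $r=1$ this is just $|c_1|^2$, so assume $r\ge2$. The diagonal $m=n$ contributes exactly $\varphi(r)E(c)$, because $c_r(0)=\varphi(r)$. For the off-diagonal terms, $0<|h|<U\le r$, so $h\not\equiv0\pmod r$. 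I will use Hölder's evaluation $c_r(h)=\mu(r/g)\varphi(r)/\varphi(r/g)$, where $g=(r,h)$ is a proper divisor of $r$ with $g\le|h|<U$. Since $|c_m\overline{c_n}|\le\frac12(|c_m|^2+|c_n|^2)$, the off-diagonal part is at most
\[
 E(c)\,\max_{m\le U}\sum_{\substack{n\le U\\ n\ne m}}|c_r(m-n)|
 \le \varphi(r)E(c)\sum_{\substack{g\mid r,\ g<U\\ r/g\ \text{squarefree}}}\frac{2U/g}{\varphi(r/g)},
\]
using that at most $2U/g$ integers $0<|h|<U$ are multiples of $g$. After dividing by $\varphi(r)$, it remains to show that this divisor sum is $\ll\log\log(3U)$.

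Write $k=r/g$, which is squarefree and satisfies $k>r/U\ge1$. The summand is then $(2U/r)\,k/\varphi(k)$. If $U\le r/2$ every $k$ exceeds $2$, and the crude estimate $k/\varphi(k)\ll\log\log(3r)$ would give only $\log\log r$. That is the main obstacle: the dependence must be on $U$, not on $r$. My first attempt would be to exploit the exact (signed) form of $c_r$ rather than its absolute value, but I would organize the primes in a way that also works for the Schur bound.

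Split $r=r_1r_2$, where $r_1$ collects the prime powers of $r$ at primes $p<U$ and $r_2$ those at primes $p\ge U$. Since $g<U$, every prime $p\mid r_2$ divides $g$ only if $p<U$, which is impossible; hence $g\mid r_1$. Also $r/g$ squarefree forces $p^{v_p(r)-1}\mid g$ for every $p\mid r$, and combined with $g\mid r_1$ this makes $r_2$ squarefree with $r_2\mid k$. Consequently
\[
 \frac{k}{\varphi(k)}=\frac{r_2}{\varphi(r_2)}\cdot\frac{k_1}{\varphi(k_1)},\qquad k_1=\frac{r_1}{g},
\]
and the factor $r_2/\varphi(r_2)=\prod_{p\mid r_2}(1-1/p)^{-1}$ equals $1+O(\omega(r_2)/U)$. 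Here $\omega(r_2)\le\log r/\log U$.

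The remaining task is to show that
\[
 \frac{U}{r}\sum_{g\mid r_1,\ g<U}\frac{r_1/g}{\varphi(r_1/g)}
 =\frac{U}{r_2}\sum_{g\mid r_1,\ g<U}\frac{1}{\varphi(r_1/g)}
\]
is $\ll\log\log(3U)$. The factor $1/\varphi(r_1/g)$ decays, and the constraint $g<U$ bounds the admissible $g$, which should make this manageable. I would first try to prove it by writing $1/\varphi(k_1)=k_1^{-1}\prod_{p\mid k_1}(1-1/p)^{-1}$, noting that $k_1$ involves only primes below $U$, so that $k_1/\varphi(k_1)\le\prod_{p<U}(1-1/p)^{-1}\ll\log U$ by Mertens. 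I would then try to recover $\log\log U$ from the count of $g$, using \eqref{eq:totient} for $k_1$ together with the bound $k_1\le r_1$.

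If this Schur-type route loses too much when $r_2$ is small, the fallback is to use positivity. Extend the sum from $\mathcal R(r)$ to all residues $b$ modulo $r$ that are coprime to $r_1$ only. The resulting kernel $\sum_{d\mid r_1}\mu(d)(r/d)\ind_{r/d\mid h}$ vanishes for $0<|h|<U$ whenever $r_2\ge U$ or $r_1$ is small, and then the extended sum equals $r\,\varphi(r_1)/r_1\cdot E(c)$. Comparing this with $\varphi(r)=\varphi(r_1)\varphi(r_2)$ leaves the factor $r_2/\varphi(r_2)$, which is bounded since all primes of $r_2$ are at least $U$ and hence $O(1)$ once $\omega(r_2)\ll U$. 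This is the step I expect to require the most care.
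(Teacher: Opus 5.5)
You have a genuine gap. Your main Schur-type route cannot succeed, because the divisor-sum inequality it reduces to is false. Taking absolute values of $c_r(h)$ discards exactly the cancellation the lemma depends on. Take $U=r$ squarefree. For every $m\le r$, the differences $h=m-n$ with $n\ne m$ run exactly once over the nonzero residues modulo $r$. Hence
\[
 \max_{m}\sum_{n\ne m}|c_r(m-n)|=\sum_{\substack{g\mid r\\ g<r}}\varphi(r/g)\cdot\frac{\varphi(r)}{\varphi(r/g)}=(\tau(r)-1)\varphi(r),
\]
where $\tau$ is the divisor function. Your divisor sum is therefore at least $\tau(r)-1$. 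When $r=U$ is a primorial, this is exponentially larger than $\log\log(3U)$. No bookkeeping with $r_1,r_2$, no Mertens bound for $k_1/\varphi(k_1)$, and no count of the admissible $g$ can bring it down to $\log\log(3U)$. Separately, your displayed identity drops a factor $1/g$: in fact $\frac Ur\cdot\frac{r_1/g}{\varphi(r_1/g)}=\frac{U}{r_2\,g\,\varphi(r_1/g)}$.

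Your positivity fallback uses the right mechanism, but the coprimality condition you chose to relax does not work, for two reasons. First, in the example above every prime factor of $r$ is below $U$, so $r_2=1$ and nothing is relaxed. Second, when $r_2\ne1$ the loss $r_2/\varphi(r_2)$ is unbounded in general, because $\omega(r_2)$ is not controlled by $U$. For instance, with $U=3$ and $r=\prod_{3\le p\le z}p$ the loss is $\asymp\log z$, while the target is $O(1)$.

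The missing idea is to relax coprimality only on a unitary divisor $d$ of $r$ (so $(d,r/d)=1$) with $d\ge U$ and $d/\varphi(d)$ small. Keep the condition $(b,r/d)=1$. The kernel $\sum_{e\mid r/d}\mu(e)(r/e)\ind_{(r/e)\mid h}$ then vanishes for $0<|h|<U$, since $r/e\ge d\ge U$. There are $d\varphi(r/d)$ such $b$, and $\varphi(r)=\varphi(d)\varphi(r/d)$, so the mean is at most $(d/\varphi(d))E(c)$.

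It remains to choose $d$:
\begin{itemize}
\item If some full prime-power factor $p^\nu\Vert r$ satisfies $p^\nu\ge U$, take $d=p^\nu$. The loss is at most $2$, and this already covers every $r$ with $r_2\ne1$.
\item Otherwise, multiply full prime-power factors, each below $U$, until the product first reaches $U$. Then $U\le d<U^2$, and \eqref{eq:totient} gives $d/\varphi(d)\ll\log\log(3U)$.
\end{itemize}
This is the paper's proof. There it is written via the bijection $\mathcal R(d)\times\mathcal R(s)\to\mathcal R(r)$, followed by enlarging the inner sum to all residues modulo $d$ and using orthogonality.
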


\begin{proof}
The case $U=1$ is immediate, since the left side equals $|c_1|^2$
and $\log\log3>0$.
For $U\ge2$, choose a divisor $d\mid r$ with $(d,r/d)=1$ and $d\ge U$,
and write $r=ds$.
The map $(u,v)\mapsto us+vd\pmod r$ is a bijection from
$\mathcal R(d)\times\mathcal R(s)$ to $\mathcal R(r)$.
The expression on the left of \eqref{eq:short-residue} therefore equals
\[
 \frac1{\varphi(s)}\sum_{v\in\mathcal R(s)}
 \frac1{\varphi(d)}\sum_{u\in\mathcal R(d)}
 \bigg|\sum_{n\le U}c_n\e(vn/s)\e(un/d)\bigg|^2.
\]
Enlarge the inner sum to all $u\bmod d$.
Since $n,m\le U\le d$, the condition $d\mid n-m$ forces $n=m$.
Orthogonality bounds the preceding expression by
\begin{equation}\label{eq:unitary-bound}
 \frac d{\varphi(d)}\sum_{n\le U}|c_n|^2.
\end{equation}

It remains to choose $d$. If a full prime-power factor $p^\nu\Vert r$
satisfies $p^\nu\ge U$, take $d=p^\nu$; then
$d/\varphi(d)=p/(p-1)\le2$.
Otherwise all full prime-power factors of $r$ are less than $U$.
Multiply them, in any order, until their product first reaches $U$.
Such a point is reached because their total product is $r\ge U$.
The selected product satisfies
\[
 U\le d<U^2,\qquad (d,r/d)=1.
\]
In this case \eqref{eq:totient} gives
$d/\varphi(d)\ll\log\log(3d)\ll\log\log(3U)$.
Insert either choice into \eqref{eq:unitary-bound}.
\end{proof}

\begin{lemma}\label{lem:maximal-sampling}
For integers $U,r\ge1$, $c\in\C^U$, and real $\theta$,
\begin{equation}\label{eq:maximal-all-residues}
 \sum_{b\bmod r}M_c(\theta+b/r)^2\ll (U+r)E(c).
\end{equation}
If $U\le r$, then also
\begin{equation}\label{eq:maximal-reduced-residues}
 \frac1{\varphi(r)}\sum_{b\in\mathcal R(r)}M_c(\theta+b/r)^2
 \ll\log\log(3U)E(c).
\end{equation}
Both constants are absolute and effective, and both bounds are uniform
in $\theta$.
\end{lemma}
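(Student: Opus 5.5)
The plan is to prove \eqref{eq:maximal-all-residues} by a Gallagher-type sampling argument applied directly to the maximal function $M_c$, with Proposition~\ref{prop:CH} as the only analytic input. Then \eqref{eq:maximal-reduced-residues} follows from \eqref{eq:maximal-all-residues} through the coprime factorization $r=ds$ already used in the proof of Lemma~\ref{lem:short-residue}.

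\emph{Step 1: a derivative bound for prefixes.} Write $P_K(t)=\sum_{n\le K}c_n\e(nt)$. Then $P_K'(t)=2\pi i\sum_{n\le K}nc_n\e(nt)$. Abel summation gives
\[
 \sum_{n\le K}nc_n\e(nt)=KP_K(t)-\sum_{m<K}P_m(t),
\]
and hence $|P_K'(t)|\le4\pi U M_c(t)$ for every $0\le K\le U$. This step is what lets a maximal function, which need not be differentiable, be handled as if it were.

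\emph{Step 2: pointwise sampling.} Fix $t_0$ and an interval $I$ of length $\delta$ containing $t_0$. Choose $K$ with $M_c(t_0)=|P_K(t_0)|$. For every $t\in I$,
\[
 |P_K(t_0)|\le|P_K(t)|+\int_I|P_K'|\le M_c(t)+4\pi U\int_I M_c.
\]
Average over $t\in I$ and apply Cauchy--Schwarz. This gives
\[
 M_c(t_0)^2\ll(\delta^{-1}+U^2\delta)\int_I M_c^2 .
\]
Apply this at $t_0=\theta+b/r$ with $\delta=\min\{1/r,1/U\}$. The intervals centred at these points are then disjoint modulo $1$, and $M_c$ is $1$-periodic. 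Summing over $b\bmod r$ therefore yields
\[
 \sum_{b\bmod r}M_c(\theta+b/r)^2\ll(\delta^{-1}+U^2\delta)\int_0^1M_c^2 .
\]
If $U\le r$, then $\delta=1/r$ and the factor is $r+U^2/r\le2r$. If $U>r$, then $\delta=1/U$ and the factor is $2U$. In either case the factor is at most $2(U+r)$. Proposition~\ref{prop:CH} now gives \eqref{eq:maximal-all-residues}, uniformly in $\theta$ and with an effective constant.

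\emph{Step 3: reduced residues.} Assume $U\le r$ and $U\ge2$; the case $U=1$ is immediate, since then $M_c(\cdot)=|c_1|$. Choose $d\mid r$ exactly as in the proof of Lemma~\ref{lem:short-residue}, so that $(d,r/d)=1$, $d\ge U$, and $d/\varphi(d)\ll\log\log(3U)$. Write $r=ds$. The bijection $(u,v)\mapsto us+vd$ from $\mathcal R(d)\times\mathcal R(s)$ to $\mathcal R(r)$ gives $b/r=u/d+v/s$. The left side of \eqref{eq:maximal-reduced-residues} therefore equals
\[
 \frac1{\varphi(s)}\sum_{v\in\mathcal R(s)}\frac1{\varphi(d)}\sum_{u\in\mathcal R(d)}M_c\big(\theta+v/s+u/d\big)^2 .
\]
Enlarge the inner sum to all $u\bmod d$ and apply \eqref{eq:maximal-all-residues} with modulus $d$ and shift $\theta+v/s$. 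Since $d\ge U$, the inner average is
\[
 \ll\frac{U+d}{\varphi(d)}E(c)\le\frac{2d}{\varphi(d)}E(c)\ll\log\log(3U)\,E(c).
\]
Averaging over $v$ preserves this bound, which proves \eqref{eq:maximal-reduced-residues}.

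The main obstacle is Step 1, together with the observation that Step 2 must be run with the maximizing index $K$ chosen at the sample point $t_0$. Once Step 1 shows that derivatives of all prefixes are controlled by $U M_c$, the rest is routine bookkeeping with Carleson--Hunt and the factorization from Lemma~\ref{lem:short-residue}.
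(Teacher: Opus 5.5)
Your proof is correct, and it takes a genuinely different route from the paper in both halves.

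For \eqref{eq:maximal-all-residues}, the paper never differentiates. It writes each prefix as $F_K=F_K*V_U$ with the de la Vall\'ee-Poussin kernel $V_U=2K_{2U}-K_U$ and majorizes $|V_U|$ by the nonnegative kernel $W_U=2K_{2U}+K_U$. Weighted Cauchy--Schwarz then gives the positive convolution majorant $M_c(\theta)^2\le3\int_0^1M_c(\theta-v)^2W_U(v)\,dv$. After that, everything reduces to the kernel estimate $\sum_{b\bmod r}K_m(\eta+b/r)\le m+r$ and Proposition~\ref{prop:CH}.

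You replace the kernels with an elementary local Sobolev (Gallagher-type) inequality on $\delta$-separated intervals. It rests on the Abel-summation bound $|P_K'|\le4\pi UM_c$, used with the maximizing $K$ frozen at the sample point. As a by-product, your Step~1 shows $M_d\le(2U-1)M_c$ pointwise for $d_n=nc_n$. That would also let one drop $M_d$ and $E(d)\le U^2E(c)$ in the proof of Proposition~\ref{prop:effective-maximal-prime}.

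For \eqref{eq:maximal-reduced-residues}, the paper keeps the kernel majorant. It bounds the reduced-residue averages of $K_U$ and $K_{2U}$ by applying Lemma~\ref{lem:short-residue} to a geometric vector. It needs a separate case when $2U>r$, handled via \eqref{eq:Fejer-all} and \eqref{eq:totient}. You instead apply the coprime factorization $r=ds$ directly to $M_c$ and feed each inner sum into the already proved all-residue bound with modulus $d\ge U$. This avoids the case split.

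What each route buys: the paper's single positive majorant turns any bound for kernel sums at an arbitrary set of sample points into a bound for $M_c$. Your argument is more self-contained and needs only the separation of the sample points plus Carleson--Hunt.
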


\begin{proof}
For $1$-periodic functions use the convolution
$(g*h)(\theta)=\int_0^1g(\theta-v)h(v)\,dv$ and the Fourier coefficient
$\widehat g(k)=\int_0^1g(v)\e(-kv)\,dv$.
For an integer $m\ge1$, define the Fej\'er kernel
\[
 K_m(v)=\frac1m\bigg|\sum_{j=0}^{m-1}\e(jv)\bigg|^2
       =\sum_{|k|<m}\Big(1-\frac{|k|}m\Big)\e(kv).
\]
It is nonnegative and has integral $1$. Put
\[
 V_U=2K_{2U}-K_U,\qquad W_U=2K_{2U}+K_U.
\]
Then $\widehat V_U(k)=1$ for every integer $|k|\le U$,
$|V_U|\le W_U$, and $\int_0^1W_U=3$.
Consequently every prefix polynomial
$F_K(\theta)=\sum_{n\le K}c_n\e(n\theta)$ satisfies $F_K=F_K*V_U$.
Weighted Cauchy-Schwarz gives
\[
 |F_K(\theta)|^2
 \le 3\int_0^1 |F_K(\theta-v)|^2 W_U(v)\,dv.
\]
Taking the maximum only after this inequality, we obtain
\begin{equation}\label{eq:positive-reproducing-majorant}
 M_c(\theta)^2\le
 3\int_0^1M_c(\theta-v)^2W_U(v)\,dv.
\end{equation}
No Fourier support is asserted for $M_c$ itself.

For every $\eta\in\R$, orthogonality and Cauchy-Schwarz imply
\begin{equation}\label{eq:Fejer-all}
 \sum_{b\bmod r}K_m(\eta+b/r)
 =\frac rm\sum_{s\bmod r}
       \bigg|\sum_{\substack{0\le j<m\\j\equiv s\pmod r}}
                         \e(j\eta)\bigg|^2
 \le\frac rm\Big(\frac mr+1\Big)m=m+r.
\end{equation}
Thus $\sum_bW_U(\eta+b/r)\le5U+3r$.
After translating the integration variable in
\eqref{eq:positive-reproducing-majorant}, we find
\[
 \sum_{b\bmod r}M_c(\theta+b/r)^2
 \le3\int_0^1M_c(t)^2\sum_{b\bmod r}W_U(\theta+b/r-t)\,dt.
\]
Equation~\eqref{eq:Fejer-all} and Proposition~\ref{prop:CH}
prove \eqref{eq:maximal-all-residues}.

For reduced residues, first suppose $m\le r$.
Apply Lemma~\ref{lem:short-residue} to
$c'_n=m^{-1/2}\e((n-1)\eta)$, $1\le n\le m$.
This vector has squared norm $1$; shifting the exponents from
$0,\ldots,m-1$ to $1,\ldots,m$ only multiplies the polynomial by a
number of modulus one. It follows that
\[
 \frac1{\varphi(r)}\sum_{b\in\mathcal R(r)}K_m(\eta+b/r)
 \ll\log\log(3m).
\]
We need this for $m=U,2U$. If $m=2U>r$, use
\eqref{eq:Fejer-all} instead. Since $U\le r<2U$,
\[
 \frac1{\varphi(r)}\sum_{b\in\mathcal R(r)}K_{2U}(\eta+b/r)
 \le\frac{2U+r}{\varphi(r)}
 \le\frac{3r}{\varphi(r)}\ll\log\log(3U).
\]
We therefore have, in both cases,
\[
 \frac1{\varphi(r)}\sum_{b\in\mathcal R(r)}W_U(\eta+b/r)
 \ll\log\log(3U).
\]
Sum \eqref{eq:positive-reproducing-majorant} over these residues,
translate the integration variable as above, and apply
Proposition~\ref{prop:CH}. This proves
\eqref{eq:maximal-reduced-residues} with an effective constant.
\end{proof}

\begin{lemma}\label{lem:covering}
For $P>0$, $c\in\C^U$, $a\in\Z$, $r\in\N$, $(a,r)=1$, and
$\beta\ne0$,
\begin{equation}\label{eq:covering}
 \sum_{b\bmod r}\int_P^{2P}M_c(ab/r+\beta t)^2\,dt
 \ll (rP+|\beta|^{-1})E(c).
\end{equation}
The constant is absolute and effective.
\end{lemma}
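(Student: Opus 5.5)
Since $(a,r)=1$, the map $b\mapsto ab$ permutes the residues modulo $r$, so the left side of \eqref{eq:covering} equals
\[
 \sum_{b\bmod r}\int_P^{2P}M_c(b/r+\beta t)^2\,dt .
\]
By symmetry under $\theta\mapsto-\theta$ (which replaces $c_n$ by $\overline{c_n}$ after conjugation, leaving $E(c)$ unchanged), we may assume $\beta>0$. Substituting $\theta=\beta t$ turns the integral into
\[
 \beta^{-1}\int_{\beta P}^{2\beta P}\sum_{b\bmod r}M_c(\theta+b/r)^2\,d\theta .
\]
The function
\[
 G(\theta)=\sum_{b\bmod r}M_c(\theta+b/r)^2
\]
is $1/r$-periodic, and so is the whole integrand.

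\textbf{Averaging over a period.} Over one period of length $1/r$ we have
\[
 \int_0^{1/r}G(\theta)\,d\theta=\int_0^1M_c(\theta)^2\,d\theta\le C_\mathrm{CH}E(c)
\]
by Proposition~\ref{prop:CH}. The interval $[\beta P,2\beta P]$ has length $\beta P$, so it is covered by at most $r\beta P+1$ translates of a period. Hence
\[
 \int_{\beta P}^{2\beta P}G\le (r\beta P+1)\,C_\mathrm{CH}E(c).
\]
Multiplying by $\beta^{-1}$ gives
\[
 C_\mathrm{CH}\bigl(rP+\beta^{-1}\bigr)E(c),
\]
which is \eqref{eq:covering}.

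The only point needing care is the covering count. An interval of length $\ell$ meets at most $\lceil r\ell\rceil+1\le r\ell+2$ period cells of length $1/r$; each cell contributes at most $C_\mathrm{CH}E(c)$ since $G\ge0$. The additive constant is absorbed into the $|\beta|^{-1}$ term. Note that the pointwise sampling bound \eqref{eq:maximal-all-residues} is not needed here: averaging in $\theta$ gives the sharper factor $rP+|\beta|^{-1}$ without a $U$-dependence. I do not expect a genuine obstacle; the key step is recognizing that the sum over all $b\bmod r$ reassembles the full Carleson–Hunt integral on each period.
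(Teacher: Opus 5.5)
Your proof is correct and is essentially the paper's argument: the paper substitutes $\theta=ab/r+\beta t$ and bounds how many times each point of the circle is covered by the $r$ image arcs. It counts this as the number of points of a lattice of spacing $1/r$ in an interval of length $|\beta|P$, which is at most $r|\beta|P+2$, and then applies Proposition~\ref{prop:CH} to $\int_0^1 M_c^2$. That count is the same computation as your decomposition of $[\beta P,2\beta P]$ into periods of the $1/r$-periodic function $G$. Your two covering counts, at most $r\ell+1$ arbitrary translates of a period and at most $r\ell+2$ cells aligned to the lattice, are both valid and only change the constant.
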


\begin{proof}
Substitute $\theta=ab/r+\beta t$ in each integral. The $r$ image
intervals have length $\ell=|\beta|P$, and their translation parameters
are a permutation of $0,1/r,\ldots,(r-1)/r$ modulo one.
For a fixed point of the circle, its covering multiplicity, counting
multiple windings, is bounded by the number of points of a lattice of
spacing $1/r$ in an interval of length $\ell$. This is at most
$r\ell+2$. Reversing endpoints when $\beta<0$ gives the same bound.
Hence the left side of \eqref{eq:covering} is at most
\[
 \Big(rP+\frac2{|\beta|}\Big)\int_0^1M_c(\theta)^2\,d\theta.
\]
Proposition~\ref{prop:CH} completes the proof.
\end{proof}

\begin{lemma}\label{lem:variation}
Let $I$ be a compact interval of length $h>0$ and let $F:I\to\C$
be continuously differentiable. Then
\begin{equation}\label{eq:interval-variation}
 \sup_{t\in I}|F(t)|^2
 \le\frac2h\int_I|F(t)|^2\,dt
       +2h\int_I|F'(t)|^2\,dt.
\end{equation}
\end{lemma}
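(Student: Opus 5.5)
The plan is to compare $|F|^2$ at an arbitrary point with its average over $I$ via the fundamental theorem of calculus. Write $I=[t_0,t_0+h]$ and fix $s\in I$. For every $t\in I$, since $F$ is $C^1$,
\[
 F(s)=F(t)+\int_t^sF'(u)\,du,
\]
so that
\[
 |F(s)|\le|F(t)|+\int_I|F'(u)|\,du.
\]
Squaring with $(x+y)^2\le2x^2+2y^2$ gives
\[
 |F(s)|^2\le2|F(t)|^2+2\bigg(\int_I|F'(u)|\,du\bigg)^2 .
\]
Cauchy--Schwarz on the interval of length $h$ then bounds
\[
 \bigg(\int_I|F'|\bigg)^2\le h\int_I|F'|^2 .
\]

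Next, average the resulting inequality over $t\in I$, that is, integrate in $t$ and divide by $h$. The left side does not depend on $t$. The first term on the right becomes $\frac2h\int_I|F|^2$, and the second term is already independent of $t$, giving $2h\int_I|F'|^2$. Since $s$ was arbitrary, taking the supremum over $s\in I$ yields \eqref{eq:interval-variation}.

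There is no real obstacle here. The only point to check is that the constant $2$ survives in both terms, which it does with the elementary inequality $(x+y)^2\le2x^2+2y^2$ and no optimisation. Continuity of $F'$ on the compact interval guarantees that all the integrals are finite and that the fundamental theorem of calculus applies.
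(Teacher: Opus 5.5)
Your proof is correct and is essentially the paper's argument: the fundamental theorem of calculus between two points, $(x+y)^2\le 2x^2+2y^2$ with Cauchy--Schwarz giving $|F(s)|^2\le 2|F(t)|^2+2h\int_I|F'|^2$, then averaging over one variable and taking the supremum over the other. The only difference is that you swap the names of $s$ and $t$.
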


\begin{proof}
For $s,t\in I$, the identity $F(t)=F(s)+\int_s^tF'(v)\,dv$
and Cauchy-Schwarz give
$|F(t)|^2\le2|F(s)|^2+2h\int_I|F'(v)|^2\,dv$.
Average over $s\in I$, then take the supremum over $t$.
\end{proof}

\section{The mechanism: where the logarithms come from}\label{sec:mechanism}

This section explains the estimates used in the proof. It distinguishes
losses in the cited arguments from limitations of those particular
estimates; no necessity claim about all possible methods is intended.
The details of the present argument are proved in
Sections~\ref{sec:small}, \ref{sec:fourier},
\ref{sec:effective-sieve}, and~\ref{sec:proof}.

\subsection{The bilinear reduction}
Put $L=\log N$. Lemma~\ref{lem:log} reduces the problem to bounding
\begin{equation}\label{eq:bilinear-shape}
 \sum_{p\le N}f(p)\log p\sum_{n\le N/p}f(n)\e(\alpha pn)
\end{equation}
by $O_A(N)$ plus $L$ times the second term of \eqref{eq:main-bound}.
The hyperbolic truncation couples the two variables. Our argument uses
Cauchy-Schwarz and the coefficient bounds in \eqref{eq:class}, followed
by an upper-bound sieve for the resulting nonnegative expression.
This is a choice of method, not an assertion that multiplicativity
admits no other useful exploitation.

\subsection{The Montgomery-Vaughan argument}
In \cite[Sections~3-5]{MV}, the region $pn\le N$ is decomposed into
rectangles and a remainder. The refinement depths satisfy
$J_i\ll\log(2N/q)$; they need not all be comparable to this upper bound.
Cauchy-Schwarz in the integer variable leaves a square in the prime
variable. After triangular smoothing, its off-diagonal terms involve
pairs of primes with difference $h$. The prime-pair sieve bound
supplies two logarithmic savings, which offset the two prime
logarithmic weights in the upper bound. The diagonal terms and the
remainder of the partition are estimated separately.

For the auxiliary expression in \cite[equations~(18)-(19)]{MV}, put
$\|t\|=\min_{k\in\Z}|t-k|$. For the interval-length parameters $X,Y\ge1$ used there, with
$q\le XY$ and the other hypotheses of that fundamental estimate,
\[
 \sum_{1\le h\le X}\frac h{\varphi(h)}
       \min\Big(Y,\frac1{Y\|ha/q\|^2}\Big)
 \ll \frac{XY}{\varphi(q)}+X+Y\log(2X)+q\log(2XY/q).
\]
The minimum is interpreted as $Y$ when $ha/q$ is an integer.
The last logarithm arises in a harmonic divisor summation and enters
the bilinear bound under a square root. In the summation over the
refinement layers in \cite[Section~5]{MV}, it combines with the upper
bound for $J_i$ to give the displayed power $3/2$. The disjointness
hypotheses of \cite[equation~(16)]{MV} do not directly allow all layers
to be merged into one family. This observation does not exclude a
different treatment of their overlaps. The rational estimate is then
transferred to the $R$-form in \cite[Section~6]{MV}.

\subsection{Bachman's displacement logarithm}
Bachman introduces the completed weight in
\cite[equation~(2.7)]{Bachman}, using Siegel-Walfisz for its distribution
in the coprime residue classes. In the notation of this paper his
parameter $x/\bar x$ equals $B$. The subdivision in
\cite[equation~(2.15), p.~50]{Bachman} introduces weights $1/j$ for
$1\le j\le\lceil B\rceil$. In the estimate leading to
\cite[equation~(3.1), p.~51]{Bachman}, their sum is
$O(\log(2B))$. The square root yields the factor
$\sqrt{\log(2B)}$ in \eqref{eq:bachman-B}.

\subsection{Retaining the prefixes}
For a block $P<p\le2P$, set $U=\lfloor N/P\rfloor$ and
$c_n=f(n)$ for $n\le U$. We majorize the moving prefix by
$M_c(\alpha p)$, with $M_c$ as in \eqref{eq:maximal-definition}.
The Carleson-Hunt inequality controls this maximum in $L^2$ over a
period with an absolute constant, not a logarithmic loss. This avoids
the further rectangular refinement of the hyperbolic cutoff in the
remaining blocks; the dyadic subdivision in $P$ is still used.
Cauchy-Schwarz in the prime variable leaves the nonnegative expression
$Q_P^*(c)$ of \eqref{eq:Qstar-def}; this orientation is also Bachman's
\cite[equation~(2.6)]{Bachman}. Expanding a fixed prefix square here
would produce pairs of integer indices $n,m$, rather than prime pairs.
We instead retain the square and compare its values at primes with
integrals of maximal functions.

On each short interval, the Brun-Titchmarsh bound controls the prime
mass in every reduced residue class. Lemma~\ref{lem:variation} is
applied separately to the smooth prefix polynomials, after which the
bounds are majorized by $M_c$ and by $M_d$, where $d_n=nc_n$.
This proves the positive transfer estimate
\eqref{eq:positive-sieve-transfer}. Neither the maximum nor the moving
integer cutoff is differentiated. There is no prime-pair divisor
weight at this step and no termwise harmonic summation of the
oscillatory pair kernels.

\subsection{The interval length and the arithmetic factor}
Put $B=\max\{1,N|\beta|\}$ and $D=rB$. The sufficient condition
$P\ge16D^2$, with $h=P/\lceil B\rceil$, gives
\[
 h|\beta|U\le1,\qquad \frac h r\ge2\sqrt P.
\]
After applying the same sampling or covering estimate to $c$ and $d$,
the derivative contribution is controlled by
$h^2|\beta|^2E(d)\le h^2|\beta|^2U^2E(c)\le E(c)$.
The second inequality above makes $\log(2h/r)$ comparable to
$\log P$, so the sieve denominator offsets the weight $\log p$.
These are sufficient choices for this proof, not an optimality claim
for the local range $P\ge16D^2$. The case $\beta=0$ uses maximal
sampling, not a bound involving $|\beta|^{-1}$.

The factor $1/\varphi(r)$ in the displacement term enters through
\eqref{eq:effective-prime-mass} and remains after the covering estimate.
For $B>1$, this gives
$Q_P^*(c)\ll(rP+N/B)E(c)/\varphi(r)$.
The short reduced-residue estimate has a different role: it handles
$P>N/r$, whose total contribution is $O_A(N)$ before division by $L$.
The iterated logarithm in \eqref{eq:R-bound} comes from the totient
bound \eqref{eq:totient} in the transference argument.
Remark~\ref{rem:totient-obstruction} rules out an absolute $O(E(c))$
replacement in that particular unrestricted reduced-residue inequality;
it does not establish the optimal totient dependence for multiplicative
coefficients.

\subsection{The global ranges and effectivity}
First handle $r\ge L^3$ by Proposition~\ref{prop:MV}. In the remaining
range $r<L^3$, the hypotheses \eqref{eq:main-approx} give $D\le L^3$:
this uses $D=r$ when $B=1$ and $D=rN|\beta|\le L^3$ otherwise.
Lemma~\ref{lem:small} then removes primes up to a dyadic
$Y\asymp D^2$. The remaining blocks are summed using
\eqref{eq:block} for $P\le N/r$ and \eqref{eq:large-block} for $P>N/r$.
The choice $Q_0=N/L^3$, also used by Bachman
\cite[Theorems~1 and~4]{Bachman}, is convenient for these reductions;
no maximal range of validity is claimed for this choice.

The prime-block argument requires only the effective one-sided
Brun-Titchmarsh estimate, not an asymptotic in arithmetic progressions.
It therefore avoids the ineffective Siegel-Walfisz input in the
completion route. The remaining constants are accounted for in
Section~\ref{sec:effectivity}. This avoids the ineffective input; it
does not make Siegel's constant computable.

\section{Effective prime-block estimates}\label{sec:effective-sieve}

\begin{proposition}\label{prop:effective-maximal-prime}
Let $N>P$, $U=\lfloor N/P\rfloor$, and
$\alpha=a/r+\beta$, where $a\in\Z$, $r\in\N$, $(a,r)=1$, and
$\beta\in\R$. Set $B=\max\{1,N|\beta|\}$ and $D=rB$.
For $c\in\C^U$, write $E=E(c)$ and
\begin{equation}\label{eq:Qstar-def}
 Q_P^*(c)=\sum_{P<p\le2P}M_c(\alpha p)^2\log p.
\end{equation}
If $P\ge16D^2$, then
\begin{equation}\label{eq:effective-prime-general}
 Q_P^*(c)\ll\frac{rP+N/B}{\varphi(r)}E.
\end{equation}
If also $U\le r$, then
\begin{equation}\label{eq:effective-prime-short}
 Q_P^*(c)\ll P\log\log(3U)E.
\end{equation}
The constants are absolute and effective. There is no multiplicativity
hypothesis on $c$.
\end{proposition}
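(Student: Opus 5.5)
Split $(P,2P]$ into consecutive intervals $I$ of length $h=P/\lceil B\rceil$; there are $\lceil B\rceil$ of them. On each $I$ and in each reduced class $b\in\mathcal R(r)$, bound the prime mass using Proposition~\ref{prop:BT}. The hypothesis $P\ge16D^2$ gives $h\ge P/(2B)\ge 8r^2B\ge r$. It also gives $h/r\ge 2\sqrt P$, so $\log(2h/r)\gg\log P$. Hence
\begin{equation}\label{eq:effective-prime-mass}
 \sum_{\substack{p\in I\\ p\equiv b\ (r)}}\log p\ll \frac{h}{\varphi(r)}.
\end{equation}
Primes dividing $r$ are excluded automatically, since $P>r$. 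For $p\equiv b\pmod r$ we have $\alpha p=ab/r+\beta p$. Consequently
\[
 Q_P^*(c)\ll\frac h{\varphi(r)}\sum_{b\in\mathcal R(r)}\sum_I\sup_{t\in I}M_c(ab/r+\beta t)^2 .
\]
This is where the prime information ends. The remaining task is to bound this supremum sum by continuous averages.

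Fix $b$, $I$ and $K$, and put $F_K(t)=\sum_{n\le K}c_n\e(n(ab/r+\beta t))$. Apply Lemma~\ref{lem:variation} to each $F_K$ separately, before taking the maximum over $K$. Since $F_K'(t)=2\pi i\beta\sum_{n\le K}nc_n\e(\cdots)$, we get
\[
 \sup_I|F_K|^2\le\frac2h\int_I M_c(\theta_t)^2\,dt+8\pi^2h\beta^2\int_I M_d(\theta_t)^2\,dt,
\]
where $d_n=nc_n$ and $\theta_t=ab/r+\beta t$. Now take the maximum over $K$. Summing over $I$ reassembles $\int_P^{2P}$. For the derivative term, $h|\beta|U\le (P/B)|\beta|(N/P)\le1$, because $N|\beta|\le B$; therefore $h^2\beta^2E(d)\le h^2\beta^2U^2E(c)\le E(c)$. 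When $\beta\ne0$, summing over $b\in\mathcal R(r)$ (enlarged to all $b\bmod r$) and applying Lemma~\ref{lem:covering} to both $c$ and $d$ gives
\[
 Q_P^*(c)\ll\frac{h}{\varphi(r)}\cdot\frac1h\,(rP+|\beta|^{-1})E .
\]
If $B>1$ then $|\beta|^{-1}=N/B$, which yields \eqref{eq:effective-prime-general}.

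The cases $B=1$ (including $\beta=0$) need separate care, because then $|\beta|^{-1}$ may greatly exceed $N$. Here I would avoid integrating in $t$ at all. Take a single interval $I=(P,2P]$ with $h=P$; we still have $hU|\beta|\le N|\beta|\le1$. Instead of integrating, bound $\sup_{t\in I}M_c(ab/r+\beta t)$ through the reproducing majorant \eqref{eq:positive-reproducing-majorant} with a fixed $\theta$. Concretely, cover $\{\beta t:t\in I\}$, an arc of length at most $1/U$, and apply the variation lemma in $\theta$. Both approaches reduce the problem to $\sum_{b\bmod r}M(\theta+b/r)^2\ll(U+r)E$ from Lemma~\ref{lem:maximal-sampling}. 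Multiplying by $P/\varphi(r)$ then gives $(rP+N)E/\varphi(r)$, as required.

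For \eqref{eq:effective-prime-short}, use the same decomposition but apply \eqref{eq:maximal-reduced-residues} instead of the covering bound. The derivative and sup terms are bounded pointwise in $t$, with the factor $\log\log(3U)$. Averaging over $b$ against the mass $h/\varphi(r)$ per class, then summing over the $\lceil B\rceil$ intervals, gives $P\log\log(3U)E$. The main obstacle is the bookkeeping in the $B=1$ and short cases, namely turning the supremum over $t$ into sampling at fixed points without losing the $\varphi(r)$ normalization. The Brun–Titchmarsh step and the derivative control are routine.
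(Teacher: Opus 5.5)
Your proposal is correct and is essentially the paper's proof. It uses the same partition into $\lceil B\rceil$ intervals of length $h=P/\lceil B\rceil$ and the same Brun--Titchmarsh prime mass $\ll h/\varphi(r)$, and it applies Lemma~\ref{lem:variation} to each fixed prefix before maximizing; it then uses Lemma~\ref{lem:covering} for $B>1$ and the sampling bounds of Lemma~\ref{lem:maximal-sampling} for $B=1$ and for $U\le r$. The only difference is cosmetic: for $B=1$ you need not redo the variation argument in $\theta$, since the integrated transfer estimate \eqref{eq:positive-sieve-transfer} already holds for every $\beta$, including $\beta=0$, and the paper simply applies \eqref{eq:maximal-all-residues} at each fixed $t$ and integrates, which is exactly the reduction you reach.
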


\begin{proof}
Put $J=\lceil B\rceil$ and $h=P/J$. Partition $(P,2P]$ into the
half-open intervals
$I_j=(P+(j-1)h,P+jh]$, $1\le j\le J$.
Their closures are used when taking suprema and integrals. Since
$B\le J\le2B$, $PU\le N$, and $N|\beta|\le B$,
\begin{equation}\label{eq:interval-parameters}
 h\ge\frac{P}{2B},\qquad |\beta|hU\le1,\qquad
 \frac h r\ge\frac{P}{2D}\ge2\sqrt P.
\end{equation}
Also $P\ge16D^2\ge16r^2>r$, so every prime in $(P,2P]$ is
coprime to $r$.

For $b\in\mathcal R(r)$, Proposition~\ref{prop:BT} applies to $I_j$
with upper endpoint $P+jh$ and length $h$. It gives
\begin{equation}\label{eq:effective-prime-mass}
 \sum_{\substack{p\in I_j\\p\equiv b\pmod r}}\log p
 \le C_\mathrm{BT}\frac{h\log(2P)}{\varphi(r)\log(2h/r)}
 \ll\frac h{\varphi(r)}.
\end{equation}
The last constant is effective: by \eqref{eq:interval-parameters},
$\log(2h/r)\ge\log(4\sqrt P)$, whose ratio to $\log(2P)$ is bounded
below by a positive absolute constant.

For $0\le K\le U$ put
\[
 F_{K,b}(t)=\sum_{n\le K}c_n\e\bigl(n(ab/r+\beta t)\bigr),
 \qquad d_n=nc_n\quad(1\le n\le U).
\]
Then $E(d)\le U^2E$ and
$|F'_{K,b}(t)|\le2\pi|\beta|M_d(ab/r+\beta t)$.
Apply Lemma~\ref{lem:variation} to each fixed $K$ on the closure of
$I_j$. Bounding its two integrands by the corresponding maximal
expressions gives a bound independent of $K$, and hence
\[
 \sup_{t\in I_j}M_c(ab/r+\beta t)^2
 \le\frac2h\int_{I_j}M_c(ab/r+\beta t)^2\,dt
 +8\pi^2h|\beta|^2\int_{I_j}M_d(ab/r+\beta t)^2\,dt.
\]
For a prime $p\equiv b\pmod r$, periodicity gives
$M_c(\alpha p)=M_c(ab/r+\beta p)$.
Multiply the last inequality by the prime-mass bound
\eqref{eq:effective-prime-mass}, and sum over $j$ and $b$.
We obtain the positive transfer estimate
\begin{equation}\label{eq:positive-sieve-transfer}
 Q_P^*(c)\ll\frac1{\varphi(r)}\sum_{b\in\mathcal R(r)}
 \int_P^{2P}\big(M_c(ab/r+\beta t)^2
   +h^2|\beta|^2 M_d(ab/r+\beta t)^2\big)\,dt.
\end{equation}
Only the fixed prefix polynomials were differentiated, not the maximum
and not any prime-dependent truncation.

If $B>1$, enlarge the residue sum to all classes and apply
Lemma~\ref{lem:covering}, once to $c$ and once to $d$.
Equation~\eqref{eq:positive-sieve-transfer} is then
\[
 \ll\frac{rP+|\beta|^{-1}}{\varphi(r)}
                  (E+h^2|\beta|^2E(d))
 \le\frac{2(rP+N/B)}{\varphi(r)}E
\]
up to an effective absolute constant. Here
$|\beta|^{-1}=N/B$ and \eqref{eq:interval-parameters} was used.
This proves \eqref{eq:effective-prime-general} in this case.

If $B=1$, including $\beta=0$, apply
\eqref{eq:maximal-all-residues} at each fixed $t$, again to both vectors.
It bounds \eqref{eq:positive-sieve-transfer} by
\[
 \ll\frac{P(U+r)}{\varphi(r)}
                  (E+h^2|\beta|^2E(d))
 \ll\frac{N+rP}{\varphi(r)}E.
\]
This is \eqref{eq:effective-prime-general} with $B=1$.

Finally, when $U\le r$, keep reduced residues in
\eqref{eq:positive-sieve-transfer} and apply
\eqref{eq:maximal-reduced-residues} at each $t$.
The outcome is
\[
 Q_P^*(c)\ll P\log\log(3U)
                  (E+h^2|\beta|^2E(d))
 \ll P\log\log(3U)E.
\]
This proves \eqref{eq:effective-prime-short} for every $\beta$, including zero.
\end{proof}

\begin{proposition}\label{prop:arbitrary-prefixes}
Use the parameter hypotheses of Proposition~\ref{prop:effective-maximal-prime},
including $P\ge16D^2$. Let $a_p$ for $P<p\le2P$ and $c_n$ for
$1\le n\le U$ be complex coefficients satisfying
\[
 \sum_{P<p\le2P}|a_p|^2\log p\le A_1^2P,\qquad
 \sum_{n\le U}|c_n|^2\le A_2^2U,
\]
where $A_1,A_2\ge0$. For any integers $K(p)$ with $0\le K(p)\le U$, put
\[
 \mathcal T_{a,c,K}(P)=
 \sum_{P<p\le2P}a_p\log p\sum_{n\le K(p)}c_n\e(\alpha pn).
\]
Then
\begin{equation}\label{eq:arbitrary-block}
 |\mathcal T_{a,c,K}(P)|\ll A_1A_2
 \bigg(\frac{N}{\sqrt{\varphi(r)B}}
 +\sqrt{\frac{rNP}{\varphi(r)}}\bigg).
\end{equation}
If $U\le r$, one also has
\begin{equation}\label{eq:arbitrary-short-block}
 |\mathcal T_{a,c,K}(P)|
 \ll A_1A_2N\sqrt{\frac{\log\log(3U)}U}.
\end{equation}
The constants are absolute and effective, independently of the choices
of $K(p)$ and of the coefficients.
\end{proposition}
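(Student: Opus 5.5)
The plan is to bound the prime-dependent prefixes by the maximal function and then apply Cauchy--Schwarz in the prime variable, so that everything reduces to Proposition~\ref{prop:effective-maximal-prime}. For each prime $P<p\le2P$,
\[
 \bigg|\sum_{n\le K(p)}c_n\e(\alpha pn)\bigg|\le M_c(\alpha p),
\]
because $0\le K(p)\le U$ is one of the prefixes over which $M_c$ maximizes. This is the only place where $K(p)$ enters, and it explains why the constants do not depend on the choice of $K(p)$. Writing $\log p=(\log p)^{1/2}(\log p)^{1/2}$, Cauchy--Schwarz gives
\[
 |\mathcal T_{a,c,K}(P)|
 \le\bigg(\sum_{P<p\le2P}|a_p|^2\log p\bigg)^{1/2}Q_P^*(c)^{1/2}
 \le A_1\sqrt P\,Q_P^*(c)^{1/2}.
\]
Here $Q_P^*(c)$ is the quantity in \eqref{eq:Qstar-def}.

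For \eqref{eq:arbitrary-block}, insert \eqref{eq:effective-prime-general} together with $E(c)\le A_2^2U\le A_2^2N/P$. This gives
\[
 |\mathcal T|\ll A_1A_2\sqrt P\sqrt{\frac{(rP+N/B)N}{\varphi(r)P}}
 =A_1A_2\sqrt{\frac{rNP+N^2/B}{\varphi(r)}}.
\]
Then $\sqrt{x+y}\le\sqrt x+\sqrt y$ splits this into $N/\sqrt{\varphi(r)B}$ and $\sqrt{rNP/\varphi(r)}$, which are exactly the two terms of \eqref{eq:arbitrary-block}.

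For \eqref{eq:arbitrary-short-block}, when $U\le r$, use \eqref{eq:effective-prime-short} instead:
\[
 |\mathcal T|\ll A_1\sqrt P\sqrt{P\log\log(3U)\,A_2^2U}
 =A_1A_2P\sqrt{U\log\log(3U)}.
\]
It remains to rewrite this as $N\sqrt{\log\log(3U)/U}$, which needs $PU\le N$. Since $U=\lfloor N/P\rfloor$, this holds, because $PU^{1/2}\cdot U^{1/2}=PU\le N$ and hence $P\sqrt U\le N/\sqrt U$.

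There is no genuine obstacle once Proposition~\ref{prop:effective-maximal-prime} is available. The only points needing care are that $U\ge1$, which follows from $N>P$, so that dividing by $U$ is legitimate; that the coefficient normalization $E(c)\le A_2^2U$ is converted using $U\le N/P$ and not $N/P\le 2U$; and that effectivity is inherited directly, since the only constants involved are those of Proposition~\ref{prop:effective-maximal-prime}.
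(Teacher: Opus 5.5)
Your proof is correct and takes essentially the same approach as the paper. You majorize each prime-dependent prefix by $M_c(\alpha p)$ and apply Cauchy--Schwarz with weights $\log p$ to get $|\mathcal T_{a,c,K}(P)|^2\le A_1^2P\,Q_P^*(c)$, then insert \eqref{eq:effective-prime-general} or \eqref{eq:effective-prime-short} from Proposition~\ref{prop:effective-maximal-prime}, using $E(c)\le A_2^2U$ and $PU\le N$.
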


\begin{proof}
Cauchy-Schwarz gives
\[
 |\mathcal T_{a,c,K}(P)|^2\le A_1^2P Q_P^*(c).
\]
By \eqref{eq:effective-prime-general} and $PU\le N$, this is
\[
 \ll A_1^2A_2^2\frac{P(rP+N/B)U}{\varphi(r)}
 \le A_1^2A_2^2
       \Big(\frac{rNP}{\varphi(r)}+\frac{N^2}{\varphi(r)B}\Big)
\]
up to an absolute constant. Taking square roots proves
\eqref{eq:arbitrary-block}.
Using \eqref{eq:effective-prime-short} instead gives
$|\mathcal T_{a,c,K}(P)|\ll A_1A_2P\sqrt{U\log\log(3U)}$.
The inequality $PU\le N$ proves \eqref{eq:arbitrary-short-block}.
\end{proof}

For $f\in\FA$, define
\begin{equation}\label{eq:T-def}
 T(P)=\sum_{P<p\le2P}f(p)\log p
                \sum_{n\le N/p}f(n)\e(\alpha pn).
\end{equation}
Taking $a_p=f(p)$, $c_n=f(n)$, and $K(p)=\lfloor N/p\rfloor$,
we have $0\le K(p)\le U$ and explicitly
\[
 E=\sum_{n\le U}|f(n)|^2\le A^2U,
 \qquad
 \sum_{P<p\le2P}|f(p)|^2\log p\ll A^2P.
\]
The second estimate follows from \eqref{eq:prime-elementary}.
Thus Proposition~\ref{prop:arbitrary-prefixes} gives
\begin{equation}\label{eq:block}
 |T(P)|\ll_A\frac{N}{\sqrt{\varphi(r)B}}
                         +\sqrt{\frac{rNP}{\varphi(r)}}
 \qquad(P\ge16D^2),
\end{equation}
and, when $U\le r$,
\begin{equation}\label{eq:large-block}
 |T(P)|\ll_A N\sqrt{\frac{\log\log(3U)}U}.
\end{equation}
For $p>N$ the chosen prefix is zero, so the final dyadic block needs no
special endpoint convention. These local estimates do not require
multiplicativity. The global reductions do; in particular, the local
coefficient generality does not by itself extend the final theorem
beyond $\FA$.

\section{Proofs of the main results}\label{sec:proof}

\begin{proof}[Proof of Theorem~\ref{thm:main}]
We may assume $N$ exceeds a sufficiently large effective absolute
constant. On the remaining bounded interval,
$|S_f(N,\alpha)|\le AN$ proves the assertion after an effective
increase in a constant depending only on $A$.
If $r\ge L^3$, then \eqref{eq:main-approx} gives
$L^3\le r\le N/L^3$ and $|\beta|\le r^{-2}$.
Proposition~\ref{prop:MV}, with $R_0=L^3$, gives
\[
 |S_f(N,\alpha)|\ll_A
 \frac NL+\frac{N(\log(2L^3))^{3/2}}{L^{3/2}}\ll_A\frac NL.
\]
The last inequality holds effectively for large $L$, since
$(\log L)^3/L\to0$.

Assume henceforth that $r<L^3$. Put $D=rB$ and choose $Y$ as in
\eqref{eq:polynomial-cutoff}. If $B=1$, then $D=r<L^3$; otherwise
$D=rN|\beta|\le L^3$ by \eqref{eq:main-approx}.
For all sufficiently large $N$ we have uniformly
\begin{equation}\label{eq:uniform-parameters}
 Y<\sqrt N,\qquad N/Y\ge16D^2,\qquad
 Y<N/r,\qquad r^2<N.
\end{equation}
The first two inequalities were checked in Lemma~\ref{lem:small}.
For the third, use $rY<32rD^2\le32L^9<N$; the fourth follows from
$r^2<L^6<N$. All these thresholds are effective and absolute.

Lemmas~\ref{lem:log} and~\ref{lem:small} now imply
\begin{equation}\label{eq:dyadic-reduction}
 L S_f(N,\alpha)=
 \sum_{\substack{Y\le P<N\\P\text{ dyadic}}}T(P)+O_A(N).
\end{equation}
Every prime $Y<p\le N$ belongs to exactly one displayed block.
Each block satisfies $P\ge Y\ge16D^2$, which verifies the sufficient
hypothesis of \eqref{eq:block} and \eqref{eq:large-block}.

For $Y\le P\le N/r$, use \eqref{eq:block}.
There are $O(L)$ such blocks, and hence the sum of their first terms is
\begin{equation}\label{eq:main-block-sum}
 \ll_A\frac{NL}{\sqrt{\varphi(r)B}}.
\end{equation}
The second terms form a geometric sum:
\begin{equation}\label{eq:geometric-sum}
 \sum_{\substack{Y\le P\le N/r\\P\text{ dyadic}}}
          \sqrt{\frac{rNP}{\varphi(r)}}
 \ll\sqrt{\frac{rN}{\varphi(r)}}\sqrt{\frac Nr}
 =\frac N{\sqrt{\varphi(r)}}\le N.
\end{equation}
If $N/r\ge N$ (that is, $r=1$), only the already specified blocks
$P<N$ are included; the same upper bound holds.

For the remaining blocks $N/r<P<N$, we have $U=\lfloor N/P\rfloor<r$,
so \eqref{eq:large-block} applies. Let $P_*$ be the largest power of
two strictly less than $N$ and put $\lambda=N/P_*\in(1,2]$.
For $P=P_*2^{-j}$, $j\ge0$,
\[
 2^j\le U=\lfloor\lambda2^j\rfloor\le2^{j+1}.
\]
Consequently the sum over the selected blocks is at most
\begin{equation}\label{eq:short-sum}
 \ll_A N\sum_{j\ge0}2^{-j/2}
           \sqrt{\log\log(3\cdot2^{j+1})}
 \ll_A N\sum_{j\ge0}2^{-j/2}\sqrt{\log(j+3)}
 \ll_A N.
\end{equation}
The convergent series has an effective absolute bound.
Combining \eqref{eq:dyadic-reduction}-\eqref{eq:short-sum}, we obtain
\[
 L|S_f(N,\alpha)|\ll_A
 N+\frac{NL}{\sqrt{\varphi(r)B}}.
\]
Division by $L$ proves \eqref{eq:main-bound}.
\end{proof}

Corollary~\ref{cor:R} is stated for $R\ge3$ to match
\eqref{eq:bachman-R}; for $2\le R<3$, the right-hand side of
\eqref{eq:R-bound} is $\gg N$, so \eqref{eq:l1-l2} gives the same
conclusion for $R\ge2$, as in \cite[Corollary~1]{MV}.

\begin{proof}[Proof of Corollary~\ref{cor:R}]
For $N$ in a fixed bounded interval the trivial estimate suffices.
For all larger $N$, $Q_0=N/(\log N)^3\ge1$.
Choose $a/r$ by Lemma~\ref{lem:dirichlet} with parameter $Q_0$, and put
$\beta=\alpha-a/r$, $B=\max\{1,N|\beta|\}$.
All hypotheses of Theorem~\ref{thm:main} hold.
We show that
\begin{equation}\label{eq:transfer-goal}
 \frac1{\varphi(r)B}\ll\frac{\log\log(3R)}R.
\end{equation}
If $R/2\le r<R$, use $B\ge1$, \eqref{eq:totient}, and
$1/r\le2/R$ to obtain \eqref{eq:transfer-goal}.
If $r\ge R$, the same conclusion follows since
$u\mapsto\log\log(3u)/u$ is decreasing for $u\ge3$.
Indeed its derivative is
\[
 \frac{1/\log(3u)-\log\log(3u)}{u^2}<0\qquad(u\ge3).
\]
No monotonicity property of $\varphi$ is required.

Suppose now $r<R/2$. Since $q\ge R$, the reduced fractions $a/r$ and
$a_0/q$ are distinct and $r<q/2$. Therefore
\[
 |\beta|\ge\frac1{rq}-\frac1{q^2}\ge\frac1{2rq}.
\]
It follows that
\[
 rB\ge rN|\beta|\ge\frac N{2q}\ge\frac R2,
\]
and hence, by \eqref{eq:totient} and $r<R$,
\[
 \frac1{\varphi(r)B}
 =\frac{r/\varphi(r)}{rB}
 \ll\frac{\log\log(3r)}R
 \le\frac{\log\log(3R)}R.
\]
This proves \eqref{eq:transfer-goal} in all cases.
Insert it into \eqref{eq:main-bound} to obtain \eqref{eq:R-bound}.
\end{proof}

\subsection{Effective dependence of the constants}\label{sec:effectivity}

The arithmetic input in the prime-block estimate is the one-sided,
effective inequality \eqref{eq:BT}. Its constant is uniform in the
interval location, its length, and the reduced residue class.
The constants in the logarithmic identity and in the prime-power
series are bounded by explicit convergent numerical series and powers
of $A$. The coarse near-rational estimate and the range $r\ge L^3$
use only the effective Montgomery-Vaughan bound
\eqref{eq:MV-input}. The finite Fourier lemmas depend on the effective
absolute Carleson-Hunt constant and the effective totient bound
\eqref{eq:totient}; their remaining steps are finite identities,
Cauchy-Schwarz, and integration. The elementary thresholds in
\eqref{eq:uniform-parameters} can also be found effectively.

It follows by composition that a constant $C(A)$ in each main result
can be bounded by a computable function of $A$. We have not optimized
or evaluated it numerically. The proofs of Theorem~\ref{thm:main}
and Corollary~\ref{cor:R} neither exclude an exceptional real
character nor use a zero-free region for Dirichlet $L$-functions.
The discussion of Siegel-Walfisz concerns the alternative completion
route, not a dependency of these proofs. The effective prime-mass
argument avoids the progression asymptotic used in the earlier
completion proof; it does not make Siegel's constant computable.

\subsection{A simultaneous exceptional set}\label{sec:exceptional}

Let $\mathbb T=\R/\Z$, with Lebesgue measure $\operatorname{meas}$
normalized by $\operatorname{meas}(\mathbb T)=1$. For $A\ge1$,
$N\ge3$, and $\lambda>0$ put
\begin{equation}\label{eq:exceptional-definition}
 \mathcal E_A(N,\lambda)
 =\{\alpha\in\mathbb T:\ |S_f(N,\alpha)|>\lambda N\ \text{for some}\ f\in\FA\},
\end{equation}
an open set, being a union of open superlevel sets of continuous
functions. Montgomery and Vaughan's Corollary~2 \cite{MV} gives
$|S_f(N,\alpha)|\ll_AN/\log N$ for almost all fixed $\alpha$ and all
$N>N_0(\alpha)$, uniformly in $f\in\FA$. The statement below fixes $N$
instead and bounds the measure of the set of phases that has to be
excluded for the whole class at once. It requires a count of
denominators with bounded totient; the count is classical, and the
elementary proof given keeps the constant effective.

\begin{lemma}\label{lem:inverse-totient}
There is an effective absolute constant $C_\varphi$ such that
\begin{equation}\label{eq:inverse-totient-count}
 \#\{r\in\N:\ \varphi(r)\le H\}\le C_\varphi H\qquad(H\ge1).
\end{equation}
\end{lemma}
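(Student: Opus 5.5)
The plan is to bound the count by a weighted sum that can be computed from an Euler product. If $\varphi(r)\le H$, then $1\le H/\varphi(r)$. Hence
\[
 \#\{r:\varphi(r)\le H\}\le\sum_{\varphi(r)\le H}\frac{H}{\varphi(r)}.
\]
This alone is not enough, because $\sum 1/\varphi(r)$ diverges. Instead we use a Rankin-type weight. For $0<\sigma<1$ (we will take $\sigma=1/2$, say, though any fixed choice works),
\[
 \#\{r:\varphi(r)\le H\}\le\sum_{r\ge1}\Big(\frac{H}{\varphi(r)}\Big)^{1+\sigma}
 \cdot\ind_{\varphi(r)\le H}\le H^{1+\sigma}\sum_{r\ge1}\varphi(r)^{-1-\sigma}.
\]
This gives only $H^{1+\sigma}$, which is too weak. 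So we switch to the cleaner route: show that
\[
 \sum_{r\ge1}\frac{1}{\varphi(r)}\ind_{\varphi(r)\le H}\ll \log(2H)
\]
is not needed; what we need is the linear count. The standard elementary device is to write $r/\varphi(r)=\sum_{d\mid r}\mu^2(d)/\varphi(d)$.

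The first step is a reduction. Since $\varphi(r)\ge r/(r/\varphi(r))$, the condition $\varphi(r)\le H$ implies $r\le H\cdot r/\varphi(r)$. Therefore
\[
 \#\{r:\varphi(r)\le H\}\le\sum_{r\ge1}\ind_{r\le H r/\varphi(r)}.
\]
Next, for each $r$ in the set we have $1\le (H/r)\cdot(r/\varphi(r))$. This gives the bound
\[
 \#\{r:\varphi(r)\le H\}\le\sum_{r\le R_H}\frac{H}{r}\cdot\frac{r}{\varphi(r)}\cdot\ind_{\varphi(r)\le H},
\]
which again leads to a harmonic-type divergence. To avoid it, use the square of the ratio: $1\le (H/\varphi(r))^{2}$. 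Combined with the crude fact $\varphi(r)\ge\sqrt{r/2}$, which gives $r\le 2H^2$, this bounds the count by $H^2\sum_{r\le 2H^2}\varphi(r)^{-2}$. That is $O(H^2)$, still too weak. The correct approach is therefore the direct one. Write $r=dm$, where $d$ is the product of $p^{\nu}\Vert r$ over all prime powers and we exploit
\[
 \#\{r:\varphi(r)\le H\}\le\sum_{r}\ind_{\varphi(r)\le H}\le \sum_{r}\frac{H}{\varphi(r)}\,\ind_{\varphi(r)>H/2}+\#\{r:\varphi(r)\le H/2\}.
\]
Iterating over $H,H/2,H/4,\dots$ reduces the claim to showing that
\[
 S(H):=\sum_{H/2<\varphi(r)\le H}\frac{1}{\varphi(r)}
\]
is bounded uniformly in $H$. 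Summing the resulting geometric series $\sum_k (H/2^k)\,O(1)$ then gives $O(H)$.

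The key estimate is $\sum_{\varphi(r)\le H}1/\varphi(r)=\kappa\log H+O(1)$, which can be proved with effective constants. Multiplicativity gives the Dirichlet series identity
\[
 \sum_{r}\frac{1}{\varphi(r)r^{s}}=\zeta(s+1)G(s),
\]
where $G$ converges absolutely for $\Re s>-1/2$. From this, by elementary convolution,
\[
 \sum_{r\le x}\frac1{\varphi(r)}=G(0)\log x+O(1).
\]
Passing from the condition $r\le x$ to $\varphi(r)\le H$ is where the difficulty lies.

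I expect the main obstacle to be the change of variable from $r$ to $\varphi(r)$. To handle it, I would avoid asymptotics entirely and use Rankin's trick with the exponent in the right place. For $\sigma\in(0,1)$, the inequality $\ind_{\varphi(r)\le H}\le (H/\varphi(r))^{\sigma}$ gives
\[
 \#\{r:\varphi(r)\le H\}\le H^{\sigma}\sum_{\varphi(r)\le H}\varphi(r)^{-\sigma}.
\]
This is still circular, so it does not close the argument by itself. The fallback I would commit to is the classical Dressler/Bateman elementary argument. Every $r$ with $\varphi(r)\le H$ factors as $r=\varphi(r)\cdot(r/\varphi(r))$, and $r/\varphi(r)=\prod_{p\mid r}p/(p-1)$ depends only on the squarefree kernel. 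One then counts the pairs consisting of $m=\varphi(r)\le H$ and the set of primes dividing $r$, and bounds the number of $r$ with a given prime set $S$ and $\varphi(r)\le H$ by $H\prod_{p\in S}p^{-1}\cdot(\#\text{choices})$. Summing over $S$ should give a convergent Euler product of the form $\prod_p\bigl(1+\frac{1}{(p-1)p}\cdot O(1)\bigr)$, which makes the constant effective.
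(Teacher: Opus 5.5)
Your proposal does not contain a proof. Each route you try is either abandoned as too weak or only restates the claim.

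\begin{itemize}
\item The reduction to $S(H)=\sum_{H/2<\varphi(r)\le H}1/\varphi(r)=O(1)$ is the dyadic form of the lemma itself, since $H\,S(H)$ lies between $\#\{r:H/2<\varphi(r)\le H\}$ and twice that number.
\item The ``key estimate'' $\sum_{\varphi(r)\le H}1/\varphi(r)=\kappa\log H+O(1)$ already implies $S(H)=O(1)$. It is therefore at least as strong as what you want, and you give no argument for it. As you observe, the asymptotic for $\sum_{r\le x}1/\varphi(r)$ does not transfer. The naive transfer, using only $H/2<r\ll H\log\log(3H)$ on the range of $S(H)$, gives only $S(H)\ll\log\log\log H$ for large $H$, hence only $O(H\log\log\log H)$.
\item The prime-set fallback fails as sketched. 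Let $s$ be the product of the primes in $S$. The $r$ with exactly that prime set and $\varphi(r)\le H$ are the numbers $r=sm$ with $\mathrm{rad}(m)\mid s$ and $m\le H/\varphi(s)$, because $\varphi(sm)=m\varphi(s)$. So every squarefree $s$ with $\varphi(s)\le H$ contributes at least one $r$, and the sum over $S$ contains the squarefree case of the lemma, which is the same problem. Moreover, no per-$S$ bound of the form $H\prod_{p\in S}O\bigl(1/(p(p-1))\bigr)$ can hold, as $S=\{p\}$ with $p$ close to $H$ shows.
\end{itemize}

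The missing idea is a mean-value bound for a power of $r/\varphi(r)$ larger than the first, combined with a dyadic decomposition in $r$ rather than in $\varphi(r)$. The paper proves $\sum_{n\le X}(n/\varphi(n))^2\le C_2X$ from the identity $(n/\varphi(n))^2=\sum_{d\mid n}g(d)$. Here $g$ is multiplicative, supported on squarefree $d$, with $g(p)=(2p-1)/(p-1)^2$, so $C_2=\prod_p(1+g(p)/p)$ is a convergent, effectively bounded Euler product. If $2^jH<r\le2^{j+1}H$ and $\varphi(r)\le H$, then $r/\varphi(r)>2^j$. Chebyshev's inequality then gives at most $4^{-j}C_2\,2^{j+1}H$ such $r$. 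Summing over $j\ge0$ and adding the at most $H$ integers $r\le H$ gives the lemma with $C_\varphi=1+4C_2$.

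Your identity $r/\varphi(r)=\sum_{d\mid r}\mu^2(d)/\varphi(d)$ yields only the first moment, which bounds each dyadic block by $O(H)$. Since $r/\varphi(r)\ll\log\log r$, up to $O(\log\log\log H)$ blocks must be summed, and you are back at $O(H\log\log\log H)$. That is the loss you kept running into; squaring $r/\varphi(r)$, rather than $H/\varphi(r)$, is what removes it.
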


\begin{proof}
Let $g$ be the multiplicative function with
$g(p)=(1-1/p)^{-2}-1=(2p-1)/(p-1)^2$ and $g(p^j)=0$ for $j\ge2$. Then
$(n/\varphi(n))^2=\sum_{d\mid n}g(d)$, both sides being multiplicative
with value $(1-1/p)^{-2}$ at every prime power $p^k$, and
$0\le g(p)/p\le6/p^2$, with equality at $p=2$. Hence, for real
$X\ge1$,
\[
 \sum_{n\le X}\Big(\frac n{\varphi(n)}\Big)^2
 =\sum_{d\le X}g(d)\Big\lfloor\frac Xd\Big\rfloor\le C_2X,
 \qquad
 C_2=\prod_p\Big(1+\frac{g(p)}p\Big)\le\exp\Big(6\sum_{m\ge2}\frac1{m^2}\Big)\le e^6.
\]
There are at most $H$ integers $r\le H$, and for $j\ge0$ an integer
$2^jH<r\le2^{j+1}H$ with $\varphi(r)\le H$ has $(r/\varphi(r))^2>4^j$,
so at most $4^{-j}C_22^{j+1}H=2^{1-j}C_2H$ such $r$ exist. Summing
over $j$ gives \eqref{eq:inverse-totient-count} with
$C_\varphi=1+4C_2$.
\end{proof}

\begin{corollary}\label{cor:simultaneous}
Let $A\ge1$. There are effective constants $c_A\ge6$ and $K_A>0$,
depending only on $A$, such that for every real $N\ge3$ and every
$\lambda$ with $c_A/\log N\le\lambda\le1$,
\begin{equation}\label{eq:exceptional-bound}
 \operatorname{meas}\mathcal E_A(N,\lambda)\le\frac{K_A}{N\lambda^4}.
\end{equation}
In particular, for $N\ge e^{c_A}$ there is a set
$\mathcal E_A(N)\subseteq\mathbb T$ of measure $\ll_A(\log N)^4/N$
such that $|S_f(N,\alpha)|\le c_AN/\log N$ for all
$\alpha\notin\mathcal E_A(N)$ and all $f\in\FA$.
\end{corollary}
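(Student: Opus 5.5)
The plan is to derive the corollary directly from Theorem~\ref{thm:main}, combined with Dirichlet's approximation (Lemma~\ref{lem:dirichlet}) and the inverse-totient count of Lemma~\ref{lem:inverse-totient}. Let $C=C(A)\ge1$ be an effective constant for which Theorem~\ref{thm:main} reads $|S_f(N,\alpha)|\le C(N/L+N/\sqrt{\varphi(r)B})$ under \eqref{eq:main-approx}. I will take $c_A=\max\{6,2C\}$.

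First, note that the hypothesis $c_A/\log N\le\lambda\le1$ forces $\log N\ge c_A\ge6$. Then $L^3\le N$, so $Q_0=N/L^3\ge1$. For every $\alpha\in\mathbb T$, Lemma~\ref{lem:dirichlet} with $Q=Q_0$ gives a reduced $a/r$ with $r\le Q_0$ and $|\beta|\le1/(rQ_0)$. Theorem~\ref{thm:main} therefore applies at every phase, uniformly in $f\in\FA$. Since $\lambda\ge 2C/L$, the term $CN/L$ is at most $\lambda N/2$. Hence $|S_f(N,\alpha)|>\lambda N$ forces $C N/\sqrt{\varphi(r)B}>\lambda N/2$, that is,
\[
 \varphi(r)B<H:=\frac{4C^2}{\lambda^2}.
\]
Because $B\ge\max\{1,N|\beta|\}$, this implies both $\varphi(r)<H$ and $|\beta|<H/(\varphi(r)N)$. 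The condition involves only $\alpha$ and not $f$, so it holds simultaneously for the whole class.

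Consequently $\mathcal E_A(N,\lambda)$ is contained in the union, over $r$ with $\varphi(r)\le H$ and over $a\in\mathcal R(r)$, of the arcs $|\alpha-a/r|<H/(\varphi(r)N)$ in $\mathbb T$. Each arc has measure at most $2H/(\varphi(r)N)$. For fixed $r$ there are $\varphi(r)$ such arcs, so each $r$ contributes at most $2H/N$. Summing over $r$ and using Lemma~\ref{lem:inverse-totient} (note $H\ge4C^2\ge1$), the total measure is at most
\[
 \frac{2H}{N}\,C_\varphi H=\frac{32C_\varphi C^4}{N\lambda^4}.
\]
This gives \eqref{eq:exceptional-bound} with $K_A=32C_\varphi C^4$, which is effective.

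For the final assertion, take $\lambda=c_A/\log N$, which is at most $1$ when $N\ge e^{c_A}$. Set $\mathcal E_A(N)=\mathcal E_A(N,\lambda)$; its measure is at most $K_A(\log N)^4/(c_A^4N)$. The only delicate point is bookkeeping rather than mathematics. One must make sure that the lower bound on $\lambda$ itself guarantees $N$ is large enough for $Q_0\ge1$, which is why $c_A\ge6$ is needed. One must also make sure that $\varphi(r)B<H$ controls both the denominator and the arc width at once; this holds because $B$ dominates both $1$ and $N|\beta|$.
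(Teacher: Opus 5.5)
Your proposal is correct and follows the paper's proof essentially step for step. You use the same choices $c_A=\max\{6,2C\}$ and $H=(2C/\lambda)^2$, the same Dirichlet-approximation covering by $\varphi(r)$ arcs of radius $H/(N\varphi(r))$ for each $r$ with $\varphi(r)\le H$, and Lemma~\ref{lem:inverse-totient}; this yields the same constant $K_A=32C_\varphi C^4=2C_\varphi(2C)^4$.
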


\begin{proof}
Let $C=C(A)\ge1$ be an admissible constant in \eqref{eq:main-bound},
put $c_A=\max\{6,2C\}$, and let $c_A/L\le\lambda\le1$, where
$L=\log N$; then $L\ge6$ and $Q_0=e^L/L^3\ge1$. Let
$\alpha\in\mathcal E_A(N,\lambda)$, with a witness $f$, and choose
$a/r$ by Lemma~\ref{lem:dirichlet} with parameter $Q_0$, so that
\eqref{eq:main-approx} holds with $\beta=\alpha-a/r$. Since
$CN/L\le\lambda N/2$, Theorem~\ref{thm:main} forces
$CN/\sqrt{\varphi(r)B}>\lambda N/2$, that is,
\[
 \varphi(r)B<H:=\Big(\frac{2C}\lambda\Big)^2,
 \qquad\text{so that}\qquad
 \varphi(r)\le H,\qquad|\beta|\le\frac BN<\frac H{N\varphi(r)}.
\]
Reducing $a$ modulo $r$, we see that $\mathcal E_A(N,\lambda)$ is
covered by the arcs of radius $H/(N\varphi(r))$ about the points
$a/r$ with $\varphi(r)\le H$ and $a\in\mathcal R(r)$. There are
$\varphi(r)$ arcs for each $r$, each of measure at most
$2H/(N\varphi(r))$, and Lemma~\ref{lem:inverse-totient} gives
\[
 \operatorname{meas}\mathcal E_A(N,\lambda)
 \le\sum_{\varphi(r)\le H}\frac{2H}N\le\frac{2C_\varphi H^2}N
 =\frac{2C_\varphi(2C)^4}{N\lambda^4}.
\]
This is \eqref{eq:exceptional-bound} with $K_A=2C_\varphi(2C)^4$. For
$N\ge e^{c_A}$ take $\lambda=c_A/L$ and
$\mathcal E_A(N)=\mathcal E_A(N,c_A/L)$.
\end{proof}

For a fixed $f$, Parseval's identity gives an exceptional set of
measure at most $A^2/(N\lambda^2)$, but that set may depend on $f$;
the corollary controls the union over the class. For pointwise
$1$-bounded multiplicative functions, the same measure order follows
from \eqref{eq:bachman-B}, not necessarily with effective constants.
Indeed, for $c/\log N\le\lambda\le1$ with a sufficiently large
absolute $c$, a value exceeding $\lambda N$ forces
$\varphi(r)B/\log(2B)<H_{\mathrm B}$ with
$H_{\mathrm B}\asymp\lambda^{-2}$.
In the range $2^j\le B<2^{j+1}$, put
$Y_j=H_{\mathrm B}(j+2)2^{-j}$. The possible denominators have
$\varphi(r)\le Y_j$, and the approximation error is at most
$2^{j+1}/N$. Lemma~\ref{lem:inverse-totient} gives
$\sum_{\varphi(r)\le Y_j}\varphi(r)\ll Y_j^2$ when $Y_j\ge1$;
there are no such denominators otherwise. The covering arcs therefore
have total measure
\[
 \ll\frac{H_{\mathrm B}^2}{N}
       \sum_{j\ge0}(j+2)^2 2^{-j}
 \ll\frac1{N\lambda^4}.
\]
This is a deduction from Bachman's bound, not a formulation quoted
from his paper. The present corollary supplies the full class $\FA$
and effective constants; no optimality of the exponent is asserted.

\section{The displacement exponent and a Fourier obstruction}\label{sec:sharpness}

\begin{proposition}\label{prop:sharpness}
For positive integers $N\to\infty$, put $B=\log N$, $\alpha=B/N$, and
$f_N(n)=n^{-i\pi B}$. Then $f_N\in\mathcal F_1$, the auxiliary
approximation $0/1$ satisfies \eqref{eq:main-approx}, and
\begin{equation}\label{eq:sharpness}
 |S_{f_N}(N,\alpha)|\sim\frac{N}{\sqrt{2B}}.
\end{equation}
In particular, for any fixed $\eps>0$, replacing $B^{-1/2}$
by $B^{-1/2-\eps}$ in \eqref{eq:main-bound} is false uniformly
in the same class and range, even when $r=1$.
\end{proposition}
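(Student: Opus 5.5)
The plan is to verify membership and the hypotheses directly, and then to evaluate $S_{f_N}(N,\alpha)$ asymptotically by a sum-to-integral comparison followed by stationary phase.

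\emph{Membership and hypotheses.} The function $f_N(n)=n^{-i\pi B}$ is completely multiplicative with $|f_N(n)|=1$. Hence $|f_N(p)|\le1$ and $\sum_{n\le x}|f_N(n)|^2=\lfloor x\rfloor\le x$, so $f_N\in\mathcal F_1$. For the approximation $0/1$ we have $r=1$, $(0,1)=1$ and $\beta=\alpha=B/N$. The condition $|\beta|\le1/Q_0=L^3/N$ reduces to $L\le L^3$, and $N|\beta|=\log N\ge1$, so the parameter $B$ of Theorem~\ref{thm:main} is exactly $\log N$.

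\emph{Sum to integral.} Write $S_{f_N}(N,\alpha)=\sum_{n\le N}\e(\phi(n))$ with
\[
 \phi(t)=\frac{Bt}{N}-\frac B2\log t .
\]
Then $\phi'(t)=B/N-B/(2t)$ is increasing, since $\phi''(t)=B/(2t^2)>0$. On $[B,N]$ it lies in $[-1/2,\,B/(2N)]$, so $|\phi'|\le 1/2+o(1)<1$ there.

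\begin{itemize}
\item By the standard Kusmin--Landau/van der Corput truncated Poisson lemma (Titchmarsh, Lemma~4.8: if $\phi'$ is monotone and $|\phi'|\le\theta<1$, the sum equals the integral plus $O_\theta(1)$), the sum over $B<n\le N$ equals $\int_B^N\e(\phi(t))\,dt+O(1)$.
\item The terms with $n\le B$ and the integral over $[1,B]$ are each trivially $O(B)$.
\end{itemize}

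Hence $S_{f_N}(N,\alpha)=\int_1^N\e(\phi(t))\,dt+O(\log N)$.

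\emph{Stationary phase.} Substitute $t=Nu$ and write
\[
 \int_1^N\e(\phi(t))\,dt=N^{1-i\pi B}\int_{1/N}^1\exp\bigl(iB\psi(u)\bigr)\,du,
 \qquad \psi(u)=2\pi u-\pi\log u .
\]
The phase $\psi$ has a single nondegenerate critical point at $u_0=1/2$, where $\psi''(u_0)=\pi/u_0^2=4\pi$. Stationary phase with large parameter $B$ then gives the main term
\[
 \sqrt{\frac{2\pi}{4\pi B}}=\frac1{\sqrt{2B}} .
\]
The endpoint $u=1$ has $|\psi'(1)|=\pi$ and contributes $O(1/B)$. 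On $[1/N,1/4]$, where $|\psi'(u)|\gg1/u$, one integration by parts (the second-derivative test) gives $O(1/B)$ for the piece near $u=1/N$ as well. Both are $o(B^{-1/2})$. Multiplying by $N$, the integral has modulus $N/\sqrt{2B}\,(1+o(1))$. Since $\log N=o(N/\sqrt{\log N})$, this proves \eqref{eq:sharpness}.

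\emph{Main obstacle.} I expect the hard part to be making the stationary-phase step rigorous with explicit error terms as $B\to\infty$, together with checking the hypotheses of the sum-to-integral lemma uniformly. I would handle the integral by splitting at $|u-1/2|\le B^{-2/5}$:

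\begin{itemize}
\item Inside this window, use a quadratic Taylor expansion with a cubic remainder of size $B\cdot B^{-6/5}=o(1)$, and complete the resulting Gaussian (Fresnel) integral.
\item Outside the window, integrate by parts using $|\psi'(u)|\gg|u-1/2|$ on $[1/4,1]$, giving $O\bigl(B^{-1}\cdot B^{2/5}\bigr)=o(B^{-1/2})$.
\end{itemize}

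\emph{Final claim.} Suppose \eqref{eq:main-bound} held with $B^{-1/2-\eps}$ in place of $B^{-1/2}$. With $r=1$ and $\varphi(1)=1$, it would give
\[
 |S_{f_N}(N,\alpha)|\ll\frac NL+\frac N{L^{1/2+\eps}}=o\Bigl(\frac N{\sqrt L}\Bigr),
\]
since here $B=L$. This contradicts \eqref{eq:sharpness}.
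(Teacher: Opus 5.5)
Your proposal is correct (the estimate you call the second-derivative test on $[1/N,1/4]$ is really the first-derivative test) and follows the same route as the paper: check membership in $\mathcal F_1$, compare the sum with an integral, rescale by $t=Nu$, and apply stationary phase to $\psi(u)=2\pi u-\pi\log u$ at $u_0=1/2$ with $\psi''(u_0)=4\pi$. The differences are only in execution, and either version is ample for $B=\log N$. You use the truncated Poisson (Kusmin--Landau) lemma to get an $O(\log N)$ sum-to-integral error, where the paper settles for the cruder unit-interval bound $1+\int_1^N|G'|\ll B\log N$. You also use a hard window $|u-1/2|\le B^{-2/5}$ with error $O(B^{-3/5})$, where the paper uses smooth cutoffs and an exact change of variables to reach $O(B^{-1})$.
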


\begin{proof}
The function is completely multiplicative and has modulus one.
The approximation $0/1$ has displacement $B/N\le(\log N)^3/N$
for large $N$, and $1\le Q_0$ then. Thus the parameter called $B$ in
the theorem is exactly the present $B$.

Let
\[
 G(x)=\exp\bigl(i(2\pi Bx/N-\pi B\log x)\bigr).
\]
Comparison on each unit interval gives
\[
 \bigg|\sum_{n=1}^NG(n)-\int_1^NG(x)\,dx\bigg|
 \le1+\int_1^N|G'(x)|\,dx\ll1+B\log N.
\]
After $x=Nu$, and extension of the lower endpoint from $1/N$ to $0$
at a cost at most $1$ after multiplication by $N$, it remains to
estimate
\[
 I(B)=\int_0^1 e^{iB\psi(u)}\,du,
 \qquad \psi(u)=2\pi u-\pi\log u.
\]
This improper integral is absolutely convergent, since the integrand
has modulus one. Its only stationary point is $u_0=1/2$, with $\psi''(u_0)=4\pi$.
We give the local stationary-phase calculation to specify the error.

Choose a fixed smooth function $\chi$ supported in a sufficiently
small interval about $u_0$, equal to $1$ near $u_0$.
For the part with amplitude $1-\chi$, integration by parts against
$(iB\psi'(u))^{-1}(e^{iB\psi(u)})'$ gives $O(B^{-1})$.
This is valid at $0$: the function
$1/\psi'(u)=u/(\pi(2u-1))$ is $O(u)$ there and its derivative is
bounded near $0$. Away from $u_0$ the remaining derivative and
boundary terms are integrable and bounded.

On the support of $\chi$, make the smooth change of variables
\[
 y=\operatorname{sgn}(u-u_0)
       \sqrt{\frac{\psi(u)-\psi(u_0)}{2\pi}}.
\]
Taylor's formula gives $y'(u_0)=1$, so this is a local diffeomorphism.
If $u=u(y)$ is its inverse, the smooth compactly supported amplitude
$a(y)=\chi(u(y))u'(y)$ satisfies $a(0)=1$, and
\[
 \int_0^1\chi(u)e^{iB\psi(u)}\,du
 =e^{iB\psi(u_0)}\int_{\R}a(y)e^{2\pi iBy^2}\,dy.
\]
Choose a fixed smooth compactly supported $\eta$ equal to $1$ near
$0$. There is a smooth compactly supported $b$ with
$a(y)=\eta(y)+yb(y)$. Integrating the second term using
$(e^{2\pi iBy^2})'=4\pi iBy e^{2\pi iBy^2}$ bounds it by
$(4\pi B)^{-1}\int_{\R}|b'(y)|\,dy$. Integrating by parts on the two tails where
$1-\eta$ is nonzero shows that
\[
 \int_{\R}\eta(y)e^{2\pi iBy^2}\,dy
 =\int_{\R}e^{2\pi iBy^2}\,dy+O(B^{-1})
 =\frac{e^{i\pi/4}}{\sqrt{2B}}+O(B^{-1}).
\]
For the last equality, insert $e^{-\delta y^2}$, evaluate the Gaussian
integral as $\sqrt\pi(\delta-2\pi iB)^{-1/2}$ with positive-real-part
branch, and let $\delta\downarrow0$. Integration by parts bounds the
tails beyond $|y|=T$ by $O((BT)^{-1})$, uniformly in $\delta\ge0$,
so the limit agrees with the improper Fresnel integral.
We have proved
\[
 I(B)=\frac{e^{iB\psi(1/2)+i\pi/4}}{\sqrt{2B}}+O(B^{-1}).
\]
Therefore
\begin{equation}\label{eq:sharpness-expansion}
 S_{f_N}(N,\alpha)
 =\frac{N}{\sqrt{2B}}
   \exp\bigl(iB\psi(1/2)-i\pi B\log N+i\pi/4\bigr)
 +O(N/B+B\log N+1).
\end{equation}
When $B=\log N$, the error is $o(N/\sqrt B)$, which gives
\eqref{eq:sharpness}.
Finally,
$N/\log N+N/B^{1/2+\eps}=o(N/\sqrt B)$ for every fixed
$\eps>0$. The dependence of $f_N$ on $N$ is permitted in a
bound uniform over the coefficient class, so this disproves the proposed
stronger exponent.
\end{proof}

\begin{remark}\label{rem:totient-obstruction}
The remaining totient factor is a different issue. In
Lemma~\ref{lem:short-residue}, take $U=r$, $b_0\in\mathcal R(r)$, and
$c_n=\e(-b_0n/r)$ for $1\le n\le r$. Then $E(c)=r$ and exact
orthogonality gives
\[
 \frac1{\varphi(r)}\sum_{b\in\mathcal R(r)}
       \bigg|\sum_{n\le r}c_n\e(bn/r)\bigg|^2
 =\frac{r^2}{\varphi(r)}=\frac r{\varphi(r)}E(c).
\]
Since $r/\varphi(r)$ is unbounded, this disproves an absolute
$O(E(c))$ bound for this particular reduced-residue mean over
arbitrary vectors. It does not prove
optimality of the totient factor for multiplicative functions,
since the displayed vector need not be multiplicative. Improving
the final arithmetic dependence would require additional
information or a change in the reduction to that unrestricted mean.
\end{remark}

\section{Conclusion and future work}\label{sec:conclusion}

We have refined the Montgomery-Vaughan bound, and Bachman's sharpening
of it, in the original coefficient class, removing a logarithmic
factor with effective implied constants. The argument combines short-interval
upper bounds for primes with maximal Fourier estimates, and the
square-root dependence on the displacement parameter is sharp in the
stated uniform setting.

The remaining arithmetic dependence presents a different question.
Remark~\ref{rem:totient-obstruction} rules out replacing the
right-hand side of \eqref{eq:short-residue} by $O(E(c))$ uniformly
for arbitrary coefficient vectors. This does not establish the
optimal arithmetic factor in the multiplicative-function theorem.
A possible direction is to retain multiplicativity or other
structure before passing to the unrestricted coefficient estimate,
or to change that reduction; this will be the subject of future
research. No improvement of the totient factor is proved here.

A second direction concerns the range of $D=rB$. The hypothesis
$P\ge16D^2$ of Proposition~\ref{prop:effective-maximal-prime} makes
the Brun-Titchmarsh bound lossless against the weight $\log p$; below
the Brun-Titchmarsh scale the same argument still gives a bound, at
the cost of a logarithm. Let $B>1$ and $r\le P<r\lceil B\rceil$, and
let $J=\lceil B\rceil$, $h=P/J$ and the intervals $I_j$ be as in the
proof of Proposition~\ref{prop:effective-maximal-prime}, so that
$h<r$. Every prime in $(P,2P]$ exceeds $r$ and is therefore coprime
to $r$, and an interval of length $h<r$ contains at most one integer
in each class modulo $r$; hence
$\sum_{p\in I_j,\ p\equiv b\ (\mathrm{mod}\ r)}\log p\le\log(2P)$ for
every $b\in\mathcal R(r)$ and every $j$. Using this in place of
\eqref{eq:effective-prime-mass}, the argument leading to
\eqref{eq:positive-sieve-transfer} gives
\[
 Q_P^*(c)\ll\log(2P)\sum_{b\in\mathcal R(r)}\int_P^{2P}
 \Big(\frac1hM_c(ab/r+\beta t)^2+h|\beta|^2M_d(ab/r+\beta t)^2\Big)\,dt.
\]
Enlarging the sum to all $b$ modulo $r$ and applying
Lemma~\ref{lem:covering} to both integrals, with $E(d)\le U^2E(c)$ and
$h|\beta|U\le B/J\le1$, we obtain
\[
 Q_P^*(c)\ll\log(2P)\Big(rP+\frac NB\Big)\Big(\frac1h+h|\beta|^2U^2\Big)E(c)
 \ll\log(2P)\Big(D+\frac NP\Big)E(c),
\]
since $rP/h=rJ\le2D$, $(N/B)/h=NJ/(BP)\le2N/P$,
$rP\,h|\beta|^2U^2\le rP|\beta|U\le D$, and
$(N/B)\,h|\beta|^2U^2\le(N/B)|\beta|U\le N/P$. The proof of
Proposition~\ref{prop:arbitrary-prefixes} then gives, in this range,
\[
 |T(P)|\ll_A\sqrt{ND\log(2P)}+N\sqrt{\frac{\log(2P)}P}.
\]
For $P\ge r\lceil B\rceil$ one may instead keep the Brun-Titchmarsh
factor $\log(2P)/\log(2P/(rJ))$ arising from \eqref{eq:BT} with
$h=P/J$. Together with the small-prime range of Lemma~\ref{lem:small},
these bounds suggest a three-range argument for larger $D$; we will
pursue this optimization in future work.

\section*{Acknowledgement}
The author is grateful to Dirk Zeindler for feedback on the manuscript
and for fruitful discussions. The original idea and conception of the
paper, together with the overall strategy of the proof and the first
draft, are due to the author.

Subsequently, ChatGPT Sol 5.6 was used for exposition and final revision. The author led the effort to make the implied constants effective, with its assistance. An adversarial audit of an earlier proof of the main theorem led to the effective form of the final estimates. The revised proof retains the maximal Fourier approach but replaces the Siegel-Walfisz completion step by a one-sided Brun-Titchmarsh argument on short intervals, combined with maximal sampling and an elementary variation estimate; see Section~\ref{sec:effective-sieve} and \eqref{eq:positive-sieve-transfer}. This replacement avoids the ineffective input rather than making the constant in Siegel's theorem computable; see Section~\ref{sec:effectivity}. All mathematical content was verified by the author, who takes full responsibility for it.

\end{document}